\newtheorem{theorem}{Theorem}[section]
\newtheorem{lemma}[theorem]{Lemma}
\newtheorem{corollary}[theorem]{Corollary}
\newtheorem{remark}[theorem]{Remark}
\numberwithin{equation}{section}
\begin{document}

\title[Fractional smoothness]{Fractional smoothness
of images of logarithmically concave measures under polynomials}

\thanks{This work has been supported by the Russian Science Foundation Grant 14-11-00196 at
Lomonosov Moscow State University.}

\author{Egor D. Kosov}

\maketitle

\begin{abstract}
We show that a measure on the real line
that is the image of a log-concave
measure under a polynomial of degree $d$ possesses
a density from the Nikol'skii--Besov class of fractional order~$1/d$. This result
is used to prove an estimate of the total variation distance
between such measures in terms of the Fortet--Mourier distance.
\end{abstract}

\noindent
Keywords: Logarithmically concave measure, Total variation distance, Fortet--Mourier distance,
Distribution of a polynomial, Nikol'skii--Besov class

\noindent
AMS Subject Classification: 60E05, 60E015, 28C20, 60F99

\section*{Introduction}

Many fundamental problems of stochastic calculus involve
investigation of the smoothness properties
of measures of the form $\nu=\mu\circ f^{-1}$, i.e., measures induced by
$\mu$-measurable functions $f$ with respect to a given
 measure $\mu$ on an infinite-dimensional
  space (e.g., the distribution of a stochastic process).
In this paper we study the class of such measures $\nu$
induced by polynomials on spaces with logarithmically concave measures.
Since all Gaussian measures are logarithmically concave,
our results apply to Gaussian measures (e.g., to the  Wiener measure).
This class of distributions $\nu$ is of interest for many
applications, because it contains typical statistics
and the class of all polynomials of a fixed degree can be considered
as an important family of nonlinear transformations of a given measure.
Various properties of measures in the class under consideration
have been studied in many works,
see \cite{BKZ, BKNP, BZ, HLN, Nourdin1, Nourdin2, NualPec, Shigek}
for the case of Gaussian measures and
\cite{ArutKos, BobkPoly, Bobk, CarWr, NSV, Nourdin3}
for the case of general logarithmically concave measures.

Our first  main result states that the density
of a polynomial image  of a log-concave measure always  belongs to
the Nikol'skii--Besov
class $B_{1,\infty}^{1/d}$ (see \cite{BIN,Nikol77}, sometimes it is also denoted by
$\Lambda^{1,\infty}_{1/d}$, see~\cite{Stein}),
where $d$ is the degree of the polynomial. We also prove the following quantitative estimate (Corollary \ref{c5.1}):
$$
\sigma_f^{1/d}
\int_\mathbb{R}|\rho(t+h) - \rho(t)|dx\le C(d)|h|^{1/d} \quad \forall h\in \mathbb{R}.
$$
Here $\rho$ is the density of the measure $\mu\circ f^{-1}$, $\mu$ is a log-concave measure, $f$ is a polynomial of degree $d$, and
$\sigma_f^2$ is the variance of $f$.

This result is used to obtain an estimate of the total variation distance
between distributions of polynomials in terms
of the Fortet--Mourier distance (Corollary \ref{c5.3}):
$$
\|\mu\circ f^{-1}-\mu\circ g^{-1}\|_{\rm TV}\le
C(d, a)\|\mu\circ f^{-1} - \mu\circ g^{-1}\|_{\rm FM}^{1/(1+d)},
$$
provided that $\sigma_f, \sigma_g \ge a$.
This estimate generalizes some recent results from
\cite{Nourdin2, Nourdin3} and  \cite{BKZ}
to the case of log-concave measures.
However,  even in the case of a Gaussian measure
the power at the Fortet--Mourier distance in our estimate is better in comparison
with similar results from the cited papers.

The paper is organized in the following way.
In Section~1 we give necessary definitions
and some preliminary results needed in the proofs of the main results.
The subsequent four sections contain the proofs of our results.
An important tool in our approach is the so-called localization
technique (Theorem \ref{t1.5}) that
enables us to reduce certain
high-dimensional inequalities to inequalities in low dimensions.
This means that if we want to obtain a dimension-free estimate for
the class of log-concave measures,
we can prove a low-dimensional estimate and then use the
localization techniques to make it dimension-free.
Let us outline some key steps in each section.
The main tool of studying smoothness of induced distributions is the Malliavin method,
 but in our case it cannot be applied directly, since the density
 of a polynomial distribution
need not be even bounded (e.g., take the $\chi^2$-distribution with one degree of freedom).
To overcome this difficulty in Section~2 we obtain a sufficient Malliavin-type
condition for the density of a measure on the real line to belong
to the Nikol'skii--Besov class (Lemma \ref{lem2.1}) and
from this we deduced an estimate of the total variation distance
in terms of the Fortet--Mourier distance
(Lemma \ref{lem2.2}).
An  important ingredient of the classical Malliavin method
is a ceratin smoothness requirement on
the measure $\mu$ the images of which we study.
Since our approach does not allow to deal with
each fixed log-concave measure, but instead deals with the whole class
of log-concave measures,
we have to provide an estimate on the derivatives of a log-concave
measure that does not depend much
on the measure. We obtain such an estimate in Section~3, where
we prove an estimate
on the variation of the Skorohod derivative of an isotropic log-concave
measure on $\mathbb{R}^n$ in terms of its isotropic constant.
In Section~4 we combine the results of the previous section
with the localization techniques
to verify our Malliavin-type condition from Section~2
for the polynomial images of log-concave measures
(Theorem \ref{t4.1}).
Note that another important ingredient
of the classical Malliavin method is a certain nondegeneracy condition imposed
on the mapping $f$ that induces the distribution under consideration.
In our case when $f$ is a polynomial,
we automatically have such a nondegeneracy condition in the
form of the Carbery--Wright inequality (Theorem \ref{t1.3}).
Finally, in
Section~5 we present our main results
(Corollaries \ref{c5.1}, \ref{c5.2}, \ref{c5.3}, and \ref{c5.4})
for log-concave measures on infinite dimensional locally convex spaces that follow from
the technical result of Theorem \ref{t4.1} and an approximation argument.

\section{Preliminaries}

In this section we introduce necessary definitions and notation.
We also formulate here some auxiliary results.

For $x,y\in \mathbb{R}^n$ let $(x, y)$
denote the standard Euclidian inner product in $\mathbb{R}^n$ and
let $|x|$ be the norm generated by this inner product, i.e. $|x|:= \sqrt{(x,x)}$.
Let $C_0^\infty(\mathbb{R}^n)$ denote the space
of all smooth functions with compact support and
let $C_b^\infty(\mathbb{R}^n)$ denote the space of all bounded
smooth functions with bounded derivatives of all orders.
For a function $\varphi$ on the real line we set
$$
\|\varphi\|_\infty:=\sup_t|\varphi(t)|.
$$

The total variation and the Fortet--Mourier distances between two probability measures
$\nu_1$ and $\nu_2$ on $\mathbb{R}$ are defined by the following equalities, respectively:
$$
\|\nu_1-\nu_2\|_{\rm TV} := \sup\biggl\{\int \varphi d(\nu_1-\nu_2), \ \varphi\in C_b^\infty(\mathbb{R}^n),\ \|\varphi\|_\infty\le1\biggr\},
$$
$$
\|\nu_1-\nu_2\|_{\rm FM} := \sup\biggl\{\int \varphi d(\nu_1-\nu_2), \ \varphi\in C_b^\infty(\mathbb{R}^n),\ \|\varphi\|_\infty\le1,\ \|\varphi'\|_\infty\le1\biggr\}.
$$
Note that $\|\nu_1-\nu_2\|_{\rm FM}\le2$.

A probability Borel measure $\mu$ on $\mathbb{R}^n$ is called
logarithmically concave (log-concave or convex) if it has
a density of the form $e^{-V}$ with respect to Lebesgue measure
on some affine subspace~$L$,
where $V\colon\, L\to (-\infty, +\infty]$ is a convex function.
This definition is equivalent to the property that for
every pair of Borel sets $A,B$ the following inequality holds
(see \cite{Bor}):
$$
    \mu(t A + (1-t)B) \ge \mu(A)^{t}\mu(B)^{1-t} \quad \forall\, t\in [0,1].
$$

A  Radon probability  measure $\mu$ on a locally convex space $E$ is called
log-concave (or convex) if
its images under  continuous  linear operators to $\mathbb{R}^n$
are log-concave.

Let $K$ be a convex body in $\mathbb{R}^n$ with $0\in K$. Its
Minkowski functional   $\|\cdot\|_K$ is defined by
$$
\|x\|_K := \inf\{t>0\colon t^{-1}x\in K\}.
$$
By convex bodies we mean closed convex sets with non empty interior.

Let $\mathrm{I}_A$ denote the indicator function of the set $A$ and let $|A|$ denote the
Lebesgue volume of the set $A$.
The symbol $\lambda$ denotes the standard Lebesgue measure on the real line.

A log-concave measure $\mu$ on $\mathbb{R}^n$ is called isotropic
if it is absolutely continuous with respect to Lebesgue measure and
$$
\int_{\mathbb{R}^n}(x,\theta)\mu(dx)=0,\quad
\int_{\mathbb{R}^n}(x,\theta)^2\mu(dx)=
L_{\mu}^2|\theta|^2\quad \forall\ \theta\in\mathbb{R}^n,
$$
where the constant $L_\mu$ is called the isotropic constant of the measure $\mu$.

Let $\mu$ be a Borel probability measure on $\mathbb{R}^n$ and let $h\in\mathbb{R}^n$.
The Skorohod derivative $D_h\mu$ of the measure $\mu$ along $h$
is a bounded signed Borel measure on $\mathbb{R}^n$ such that
$$
\int_X \partial_h \varphi(x) \mu(dx)=-\int_X \varphi(x) D_h\mu(dx)
$$
for every $\varphi\in C_b^\infty(\mathbb{R}^n)$ (see \cite{DiffMeas}).
A measure $\mu$ has the Skorohod derivatives along all vectors precisely when
it possesses a density of class $BV$ (the class of functions of bounded variation).

According to Krugova's result \cite{Krug}
(see also \cite[Section~4.3]{DiffMeas}),
for every log-concave measure $\mu$ with a density $\rho$
and for every vector $h$ of unit length one has the following equality:
$$
    \|D_h\mu\|_{\rm TV}=2\int_{\langle h\rangle^\bot} \max\limits_t\rho(x+th)dx,
$$
where $\langle h\rangle^\bot$ is the orthogonal complement of $h$.

Let $\nu$ be a Borel probability  measure on the real line
and let  $\nu_h$ denote its shift by the vector $h$:
$$
\nu_h(A):=\nu(A-h).
$$

Let $\mu$ be a Radon probability  measure on a locally convex space $E$.
Denote by
$\mathcal{P}^d(\mu)$ the closure in $L^2(\mu)$ of the set of functions of the form
$f(\ell_1,\ldots,\ell_n)$, where
$\ell_i\in E^*$ (the topological
dual space to $E$) and $f$ is a polynomial on $\mathbb{R}^n$ of degree $d$.
It is shown in \cite{ArYar} that every function from $\mathcal{P}^d(\mu)$
has a version that is a polynomial of degree $d$ in the usual algebraic sense.

For a $\mu$-measurable function $f$, let
$$
\|f\|_{r}=\biggl(\int |f|^r d\mu\biggr)^{1/r}\ \text{for} \ r>0, \quad
\|f\|_0=\exp\biggl(\int \ln|f|d\mu\biggr)
=\lim\limits_{r\to 0} \|f\|_r,
$$
$$
\mathbb{E}f:=\int f d\mu\ \text{is the expectation of the random variable } f,
$$
$$
\sigma^2_f = \int (f - \mathbb{E}f)^2d\mu\ \text{is the variance of the random variable } f.
$$

We recall (see \cite{BIN,Nikol77,Stein})
that a function $\rho\in L^1(\mathbb{R})$
belongs to the Nikol'skii--Besov class $B_{1,\infty}^\alpha$ ($0<\alpha<1$)
if there is a constant $C>0$ such that
$$
\int|\rho(x+h) - \rho(x)|dx\le C|h|^\alpha,\quad \forall h\in \mathbb{R}.
$$

\vskip .1in

The following known results will be used in the proofs.

\begin{theorem}[see \cite{Klartag, Ball}]\label{t1.1}
For every $n\in \mathbb{N}$, there is a constant $C_n$ depending only on  $n$ such that
for every isotropic log-concave measure $\mu$ on $\mathbb{R}^n$
with a density $\rho$
and the isotropic constant $L_\mu$
one has
$$
(\max\rho)^{1/n}L_\mu\le C_n.
$$
\end{theorem}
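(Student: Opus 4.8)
The quantity $(\max\rho)^{1/n}L_\mu$ is the classical isotropic constant of $\mu$, and the first observation is that it is unchanged by dilations $x\mapsto\lambda x$: such a map replaces $\rho$ by $\lambda^{-n}\rho(\cdot/\lambda)$ and $L_\mu$ by $\lambda L_\mu$, so the two factors cancel. Applying such a dilation with $\lambda=1/L_\mu$, we may assume that the covariance operator of $\mu$ is the identity, i.e. $\int(x,\theta)^2\mu(dx)=|\theta|^2$ for all $\theta$, and the statement reduces to the bound $\max\rho\le C_n^{\,n}$ for an arbitrary centred log-concave probability density $\rho$ on $\mathbb R^n$ with identity covariance.

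The one-dimensional input is elementary. If $g$ is a log-concave probability density on $\mathbb R$ with barycentre $0$ and variance $1$, then $\|g\|_\infty\le c_1$ for an absolute constant $c_1$: variance $1$ forces $g$ to keep a definite amount of mass on an interval at distance of order one from its maximum point $x_0$, and log-concavity then bounds $g(x_0)$ by an absolute constant, since otherwise the log-affine interpolation of $g$ between $x_0$ and that interval would already have integral exceeding $1$. Every one-dimensional marginal of $\rho$ is again log-concave, centred, and, the covariance of $\mu$ being the identity, has variance $1$; hence each such marginal has supremum at most $c_1$.

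The substantive step is to recover a bound on $\max\rho$ from these one-dimensional bounds, and this is exactly where the work behind the cited papers enters and where the dependence of $C_n$ on $n$ appears. Two standard routes are available. One can pass to Ball's bodies $K_p(\rho)\subset\mathbb R^n$, whose isotropic constants are comparable, up to absolute factors, to that of $\rho$, thereby reducing to the uniform measure on a convex body $K$; placing $K$ in isotropic position and combining John's theorem with the $1/(n-1)$-concavity of the function $s\mapsto|K\cap(s\theta+\theta^{\perp})|$ (a Brunn-type, essentially one-dimensional, estimate) yields $L_K\le C_n$. Alternatively one argues directly, disintegrating $\rho$ in the coordinates $x_n,x_{n-1},\dots,x_1$, controlling each one-dimensional conditional density in terms of its conditional variance, and using at each step Fradelizi's inequality $\rho(\mathrm{bar}(\rho))\ge e^{-n}\|\rho\|_\infty$ to compare the supremum of a conditional density with its value at the conditional barycentre. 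In either case it is essential that the covariance be proportional to the identity: for general log-concave densities the conclusion is false, as thin slabs already show, so isotropy must genuinely be used.

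I expect this last step to be the only real obstacle; the remainder is the scaling normalization and the one-dimensional estimate. Once the relevant inequality is in hand, combining it with the marginal bounds $\|\pi_{e_i}\rho\|_\infty\le c_1$ from the second step (for any orthonormal basis $e_1,\dots,e_n$, which is permissible since the covariance is the identity) yields $\max\rho\le C_n^{\,n}$, and undoing the dilation gives $(\max\rho)^{1/n}L_\mu\le C_n$, any finite such $C_n$ being all that the statement requires.
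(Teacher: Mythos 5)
The paper itself gives no proof of this statement: it is quoted from the literature (the references to Klartag and Ball), so there is nothing internal to compare with and your argument has to stand on its own. Its sound parts are the scaling reduction (the quantity $(\max\rho)^{1/n}L_\mu$ is dilation invariant, so one may assume identity covariance and aim at $\max\rho\le C_n^n$), the classical one-dimensional fact that a centred log-concave density of variance $1$ has supremum at most an absolute constant, and the correct observation that full isotropy, not merely unit coordinate variances, must enter (your slab example is exactly right).

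The gap is that the central step --- an isotropic log-concave density with identity covariance has $\sup\rho\le C_n^n$ --- is precisely the content of the theorem, and you do not prove it: you name two ``standard routes'' (Ball's bodies plus John's theorem, or a coordinate disintegration plus Fradelizi's inequality) without carrying either out, explicitly deferring to ``the work behind the cited papers''. The closing paragraph also does not cohere as an argument: the first route never uses the marginal bounds you prepared, so there is nothing to ``combine''; in the second route, the pointwise control of conditional densities by conditional variances is unjustified (conditional variances of an isotropic log-concave measure are not bounded below at an arbitrary point --- for the uniform measure on an isotropic convex body the one-dimensional fibres near a vertex are arbitrarily short), and passing from the supremum to the value at the barycentre via Fradelizi's inequality still requires a lower bound on the density at the barycentre in terms of the covariance, which is essentially the assertion being proved. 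Since only an $n$-dependent constant is needed, the missing step can in fact be done elementarily, and your marginal bounds can be made to carry it: set $M=\sup\rho$ and $K=\{\rho\ge e^{-1}M\}$; then $|K|\le e/M$, every hyperplane section of $K$ has $(n-1)$-dimensional volume at most $eM^{-1}\|\pi_\theta\rho\|_\infty\le c\,e/M$ (since $\rho\ge e^{-1}M$ on $K$ and each marginal $\pi_\theta\rho$ is log-concave, centred, of variance $1$), while a level-set estimate as in Lemma \ref{lem3.1} of the paper gives $|K|\ge c_n/M$; by Fubini the width of $K$ in every direction is then bounded below by a constant $c_n'>0$, so by a width--inradius inequality (Steinhagen) $K$ contains a ball of radius $c_n''>0$, and comparing the volume of that ball with $|K|\le e/M$ yields $M\le C_n^n$. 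Some concrete chain of this kind is what your proposal is missing; as written, it reduces the theorem to itself plus citations.
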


There is an open conjecture
(the hyperplane conjecture) that the constant in the previous theorem
can be chosen independent of $n$, but
for now the best known constant $C_n\sim n^{1/4}$ is due to Klartag (see \cite{Klartag}).

\begin{theorem}[see \cite{BobkPoly, Bobk}]\label{t1.2}
There is an absolute constant $c$ such that for every log-concave measure
$\mu$ on $\mathbb{R}^n$, for every number $q\ge1$ and for every polynomial
$f$ of degree $d$ on $\mathbb{R}^n$ the following inequality holds:
$$
\|f\|_q\le (cqd)^d\|f\|_p,\ \text{whenever }\ 0\le p< q<\infty.
$$
\end{theorem}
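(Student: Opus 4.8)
The plan is to reduce the estimate to the one-dimensional case by the localization technique and then treat dimension one through the logarithmic potential of a log-concave measure, using that a univariate polynomial splits into linear factors.

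\emph{Reduction to $n=1$.} Fix $q\ge1$ and $0<p<q$; the case $p=0$ will follow afterwards, since $\|f\|_p\to\|f\|_0$ as $p\to0^+$ and $\|f\|_p$ is non-decreasing in $p$. Put $M=(cqd)^d$ with $c\ge1$ to be chosen, and suppose the inequality fails for some log-concave $\mu$ on $\mathbb{R}^n$ and some polynomial $f$ of degree $d$, i.e. $\|f\|_q>M\|f\|_p$. Choose $\lambda\in(\|f\|_p,\|f\|_q/M)$, a nonempty interval. Then
$$
\int(\lambda^p-|f|^p)\,d\mu>0,\qquad \int(|f|^q-M^q\lambda^q)\,d\mu>0,
$$
and by the Lov\'asz--Simonovits localization lemma there is a log-concave probability measure $\bar\nu$ supported on a segment in $\mathbb{R}^n$ on which both integrals are still positive. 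Parametrizing the segment by a single real variable turns $f$ into a polynomial $P$ of degree at most $d$ and $\bar\nu$ into a log-concave probability measure on an interval of $\mathbb{R}$; the two inequalities give $\|P\|_{L^p(\bar\nu)}<\lambda$ and $\|P\|_{L^q(\bar\nu)}>M\lambda$, contradicting the one-dimensional version of the theorem. (If the localization degenerates to a point $x_0$, then $|P(x_0)|<\lambda$ and $|P(x_0)|>M\lambda\ge\lambda$, again impossible.) So it suffices to prove the statement for $n=1$, and there, by monotonicity of $\|P\|_p$, it is enough to prove $\|P\|_q\le M\|P\|_0$ for $q\ge1$.

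\emph{Dimension one.} Let $\mu$ be log-concave on $\mathbb{R}$; after a translation take $0$ to be a median of $\mu$ and set $\sigma^2:=\int x^2\,d\mu$. Factor $P(x)=a\prod_{j=1}^{d'}(x-z_j)$ with $d'=\deg P\le d$, $z_j\in\mathbb{C}$. Since $x\mapsto\ln|x-z|$ is $\mu$-integrable, with $g(z):=\exp\bigl(\int\ln|x-z|\,\mu(dx)\bigr)$ we get $\|P\|_0=|a|\prod_j g(z_j)$, hence $|P(x)|=\|P\|_0\prod_{j=1}^{d'}|x-z_j|/g(z_j)$. The crucial point is the lower bound
$$
g(z)\ \ge\ c_0\max(\sigma,|z|)\qquad(z\in\mathbb{C})
$$
with an absolute constant $c_0>0$. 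Granting it, $|x-z|\le|x|+|z|\le|x|+c_0^{-1}g(z)$ yields $|x-z|/g(z)\le c_0^{-1}(1+|x|/\sigma)$, so $|P(x)|\le\|P\|_0\,[C(1+|x|/\sigma)]^{d}$ with $C=c_0^{-1}\ge1$. Hence $\{|P|>t\|P\|_0\}\subseteq\{|x|>\sigma t^{1/d}/(2C)\}$ whenever $t\ge(2C)^d$, and since a log-concave probability measure with median $0$ and second moment $\sigma^2$ has sub-exponential tails $\mu(|x|>R)\le Ae^{-aR/\sigma}$ (Borell's lemma), we obtain $\mu(|P|>t\|P\|_0)\le A'e^{-a't^{1/d}}$ for all $t>0$. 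Integrating, $\|P\|_q^q=\|P\|_0^q\int_0^\infty qt^{q-1}\mu(|P|>t\|P\|_0)\,dt\le\|P\|_0^q(c_1dq)^{dq}$, i.e. $\|P\|_q\le(c_1dq)^d\|P\|_0$, which is the claim.

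\emph{The main obstacle.} Everything above is routine except the potential estimate $g(z)\ge c_0\max(\sigma,|z|)$, which is the heart of the argument. For $|z|\lesssim\sigma$ it is a reverse-Jensen bound $\int\ln|x-z|\,d\mu\ge\ln\sigma-O(1)$: the density of $\mu$ is at most $C/\sigma$ (a standard property of one-dimensional log-concave measures), so $\mu(|x-z|<\varepsilon)\le C\varepsilon/\sigma$ bounds the negative part of $\ln|x-z|$, while $\int|x-z|\,d\mu\lesssim\sigma$ bounds its positive part. For $|z|\gtrsim\sigma$ one adds that the density at distance $r$ from the median is at most $Ce^{-cr/\sigma}/\sigma$, so $|x-z|=|z|\,(1+O(\sigma/|z|))$ off a set of $\mu$-measure $e^{-\Omega(|z|/\sigma)}$, whence $\int\ln|x-z|\,d\mu\ge\ln|z|-O(1)$. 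The one place where the dependence on $d$ must be watched is the product $\prod_{j=1}^{d'}$: it contributes exactly $d'\le d$ linear factors, producing the single power $[C(1+|x|/\sigma)]^d$ and hence the exponent $d$ in $(cqd)^d$.
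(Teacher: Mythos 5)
This theorem is quoted in the paper from \cite{BobkPoly, Bobk} without proof, so there is no internal argument to compare yours with; judged on its own, your proof is essentially correct and follows the same general strategy as the cited sources. The two-constraint localization reduces the dimension-free statement to one-dimensional log-concave measures (you could equally invoke the paper's Theorem \ref{t1.5} with the linear functional $\nu\mapsto\int(|f|^q-M^q\lambda^q)\,d\nu$ and a single constraint, after first restricting $\mu$ to a large ball to get compact support and the required semicontinuity for the localization lemma), and a one-dimensional estimate finishes the job. Where you differ from Bobkov is in the one-dimensional step: he compares moments of polynomials along needle measures directly, in the spirit of Prokhorov, whereas your factorization of $P$ over $\mathbb{C}$ together with the logarithmic-potential lower bound $\exp\bigl(\int\ln|x-z|\,\mu(dx)\bigr)\ge c_0\max(\sigma,|z|)$ is the route of \cite{NSV} and \cite{CarWr}; both yield the pointwise bound $|P(x)|\le\|P\|_0\,[C(1+|x|/\sigma)]^{d}$, and your integration against the sub-exponential tail correctly produces $\Gamma(qd)^{1/q}\le(qd)^{d}$, hence the constant $(cqd)^d$ for $q\ge1$, with $p\in(0,q)$ and then $p=0$ handled by monotonicity of $p\mapsto\|f\|_p$. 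The one step compressed enough to deserve expansion is the potential bound for $|z|\gg\sigma$: on the exceptional set where $|x|>|z|/2$ the quantity $\ln|x-z|$ can be negative, so you must combine the exponentially small density near $\operatorname{Re}z$ (to control the near-singularity, as you do via the bounded-density argument in the small-$|z|$ case) with the estimate $\mu(|x|>|z|/2)\,\ln(|z|/\sigma)=O(1)$ coming from the exponential tails; the ingredients you name do suffice, so this is a matter of writing rather than a genuine gap.
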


The following estimate is
usually called the Carbery--Wright inequality (see \cite{CarWr},
it was also implicitly proved in \cite{NSV}).

\begin{theorem}[see \cite{CarWr, NSV}]\label{t1.3}
There is an absolute constant $c_1$ such that for every log-concave measure
$\mu$ on $\mathbb{R}^n$ and for every polynomial
$f$ of degree $d$ the following inequality holds:
$$
\mu(|f|\le t)\biggl(\int|f|d\mu\biggr)^{1/d}\le t^{1/d} c_1d.
$$
\end{theorem}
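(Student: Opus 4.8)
The plan is to reduce the general inequality to a one‑dimensional one via the localization technique for log‑concave measures, and then to settle the one‑dimensional case with the Remez--Chebyshev inequality for univariate polynomials.

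Since the inequality is invariant under $f\mapsto\lambda f$, it is equivalent to the homogeneous form $\mu(|f|\le t)^{d}\int|f|\,d\mu\le (c_{1}d)^{d}t$. Its left side is a product of $d+1$ integrals of fixed functions against $\mu$, so the localization lemma (Lov\'asz--Simonovits, Kannan--Lov\'asz--Simonovits, Fradelizi--Gu\'edon) applies: it suffices to verify it for the extreme points of the family of log‑concave measures with $\int|f|\,d\mu$ and $\mu(|f|\le t)$ fixed, namely Dirac masses --- where it is trivial, since then $\mu(|f|\le t)\,|f(x_0)|^{1/d}\le t^{1/d}$ --- and probability measures on a segment $[a,b]\subset\mathbb R$ with density proportional to $e^{\gamma x}$, $\gamma\in\mathbb R$. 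Restricting $f$ to such a segment yields a univariate polynomial of degree $\le d$, so the task becomes: for a log‑affine probability density $\psi$ on $[a,b]$ and $\deg p\le d$,
$$
\int_{\{|p|\le t\}}\psi\,dx\le c_{1}d\,t^{1/d}\Bigl(\int|p|\,\psi\,dx\Bigr)^{-1/d}.
$$

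Here the Remez inequality gives, for any interval $J$ and any measurable $E\subset J$ with $|p|\le t$ on $E$, the bound $\sup_{J}|p|\le t\,T_{d}\bigl(2|J|/|E|-1\bigr)$ with $T_{d}$ the Chebyshev polynomial, and hence, via $T_{d}(y)\le(2y)^{d}$, the estimate $|E|\le 4|J|\,(t/\sup_{J}|p|)^{1/d}$ whenever $\sup_{J}|p|\ge t$. I would partition $[a,b]$ into consecutive intervals $J_{k}$ on which $\psi$ oscillates by a factor at most $e$, apply this bound on each $J_k$ to $E_{k}=J_{k}\cap\{|p|\le t\}$, use $\int_{E_{k}}\psi\le(\max_{J_k}\psi)\,|E_{k}|$ together with $(\max_{J_k}\psi)\,|J_{k}|\le e\,\mu(J_{k})$, and bring in the normalizing integral through $\sup_{J_{k}}|p|\ge\mu(J_{k})^{-1}\int_{J_{k}}|p|\,\psi$; summing over $k$ and exploiting the geometric decay of a log‑affine density away from its mode (so that the intervals far from the bulk contribute a convergent geometric series) should produce the required bound, the cases $\gamma\to\pm\infty$ and $|b-a|\in\{0,\infty\}$ being covered by the same scheme or by a limiting argument.

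The main obstacle is precisely this summation. One must extract the sharp exponent $1/d$ together with a constant that is \emph{linear} in $d$: the Remez step is lossless in the exponent, but combining the contributions of all the $J_{k}$ requires a careful balance between the polynomial factor coming from $T_{d}$ and the exponential decay of $\psi$. Moreover, the intervals $J_{k}$ meeting a root of $p$ are delicate, since there $\sup_{J_{k}}|p|\approx t$ makes the Remez bound on $J_k$ useless, so one must instead enlarge $J_k$ to a neighbourhood on which $|p|$ exceeds a suitable multiple of $t$ and apply Remez there, using that $\{|p|<t\}$ is a union of at most $d+1$ intervals. A further technical point is to make the localization reduction rigorous for the nonlinear functional $\mu\mapsto\mu(|f|\le t)^{d}\int|f|\,d\mu$, which is done through the description of the extreme points of the family of log‑concave measures under two linear constraints. (One can try to bypass Remez by factoring $p(x)=c\prod_{i=1}^{d}(x-z_{i})$ over $\mathbb C$ and covering $\{|p|\le t\}$ by the $d$ intervals $\{|x-\operatorname{Re}z_{i}|\le(t/|c|)^{1/d}\}$, but relating $|c|$ to $\int|p|\,\psi$ then runs into the same difficulty, so this is not a genuine shortcut.)
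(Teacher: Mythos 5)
This statement is not proved in the paper at all: Theorem \ref{t1.3} is quoted as a known result of Carbery--Wright \cite{CarWr} and Nazarov--Sodin--Volberg \cite{NSV}, so there is no internal proof to compare with. Your plan is essentially the NSV route (localization to one-dimensional ``exponential needles'', then a Remez/Chebyshev argument for univariate polynomials), which is a legitimate strategy; but as written it is a sketch with a genuine gap at exactly the point where the theorem is hard. The one-dimensional inequality for a log-affine density $\psi\propto e^{\gamma x}$ on a segment, with the sharp exponent $1/d$ and a constant \emph{linear} in $d$, is never established. Your summation scheme does not obviously close: on the intervals $J_k$ that meet the sublevel set $\{|p|\le t\}$ near zeros of $p$ one has $\sup_{J_k}|p|\approx t$, so the Remez bound on $J_k$ is vacuous, and the local lower bound $\sup_{J_k}|p|\ge \nu(J_k)^{-1}\int_{J_k}|p|\,d\nu$ can be uselessly small there; the proposed remedy (enlarging $J_k$ until $|p|$ exceeds a multiple of $t$) moves you to intervals where the exponential weight may differ by an uncontrolled factor, and it is precisely this interplay between the Chebyshev growth of $p$ and the exponential decay of $\psi$ that must be balanced to avoid picking up an extra factor (e.g.\ $d\log d$ or a power of the number of intervals). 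You state yourself that ``the main obstacle is precisely this summation'', so the core of the proof is missing rather than merely routine.

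There is also a technical misstep in the localization reduction. The functional $\mu\mapsto\mu(|f|\le t)^{d}\int|f|\,d\mu$ is not convex in $\mu$, so it cannot be taken as the $\Phi$ of Theorem \ref{t1.5}; moreover, with two integral constraints that theorem only yields extremizers supported on affine subspaces of dimension at most $2$, not the Dirac masses and log-affine needles you assert. The standard repair is to impose the single constraint $\int|f|\,d\mu\ge b$ and maximize the linear, weakly upper semi-continuous functional $\mu\mapsto\mu(|f|\le t)$ (the set $\{|f|\le t\}$ is closed), after first reducing to compactly supported $\mu$ by conditioning on large balls; this brings you exactly to needles with density $e^{\gamma x}$ on a segment, plus Dirac masses, as in \cite{NSV}. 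With that correction the reduction is fine, but the theorem then stands or falls with the one-dimensional estimate discussed above, which remains unproved in your proposal; for a complete argument one should either reproduce the needle analysis of \cite{NSV} or the original induction of \cite{CarWr}.
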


Some analogues of the previous two theorems for measurable
polynomials on  infinite-dimen\-sio\-nal spaces are discussed in \cite{ArutKos}.

The following assertion is the Poincar\'e inequality for log-concave measures.

\begin{theorem}[see \cite{BobkIsop, KLS}]\label{t1.4}
There is an absolute constant $R$ such that for every log-concave measure
$\mu$ on $\mathbb{R}^n$ and for every locally Lipschitz  function $f$ on $\mathbb{R}^n$
one has
$$
\int\bigl(f-\mathbb{E}f\bigr)^2d\mu
\le R\int|x-x_0|^2d\mu\int|\nabla f|^2d\mu,\ \text{where }\ x_0=\int x d\mu.
$$
\end{theorem}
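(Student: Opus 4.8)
The plan is to deduce this from a Cheeger-type isoperimetric inequality for log-concave measures together with the classical fact that a Cheeger inequality self-improves to a Poincar\'e inequality. Recall first that if a probability measure $\mu$ on $\mathbb{R}^n$ satisfies
$$
\mu^{+}(\partial A)\ \ge\ h\cdot\min\{\mu(A),\,1-\mu(A)\}\qquad\text{for all Borel }A,
$$
where $\mu^{+}(\partial A)$ denotes the Minkowski boundary measure, then for every locally Lipschitz $g$ one has $\int(g-\mathbb{E}g)^2\,d\mu\le 4h^{-2}\int|\nabla g|^2\,d\mu$; this is the Maz'ya--Cheeger argument, applied to (a truncation and mollification of) $g$ via the co-area formula. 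Hence it suffices to prove that every log-concave $\mu$ on $\mathbb{R}^n$ satisfies the displayed isoperimetric inequality with $h\ge c\,\bigl(\int|x-x_0|^2\,\mu(dx)\bigr)^{-1/2}$ for an absolute constant $c$, and then $R=4c^{-2}$ works. The dimension does not appear explicitly because the factor $\int|x-x_0|^2\,d\mu$ already grows like $n$ in the isotropic case and absorbs it.

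For the isoperimetric inequality the main tool is the localization lemma of Lov\'asz--Simonovits and Kannan--Lov\'asz--Simonovits (the same circle of ideas as the localization technique of Theorem~\ref{t1.5}). I would argue by contradiction: if the inequality fails for some log-concave $\mu$ and some set $A$, then after a standard smoothing of $A$ and of its boundary one obtains finitely many integrable functions $f_i$ with $\int f_i\,d\mu$ of prescribed signs, encoding that $\mu(A)$ and $1-\mu(A)$ are not too small, that the boundary is too thin, and that the second moment is under control. The localization lemma then produces a segment $[a,b]\subset\mathbb{R}^n$ together with a log-concave weight along it — equivalently a one-dimensional log-concave probability measure $\gamma$ supported on an interval, the weight being of the form $\ell(t)^{n-1}$ for an affine nonnegative $\ell$ — for which the same finite system of integral inequalities still holds. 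This reduces the whole matter to a one-dimensional statement.

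The one-dimensional statement is that for a log-concave probability measure $\gamma$ on an interval $I\subseteq\mathbb{R}$, every partition of $I$ into two half-intervals has boundary measure (a single value of the density at the splitting point) at least $c\,(\mathrm{Var}_\gamma)^{-1/2}$ times the smaller of the two masses; equivalently, $\mathrm{Var}_\gamma(u)\le C\,\mathrm{Var}_\gamma(\mathrm{id})\int(u')^2\,d\gamma$, which can be checked through Muckenhoupt's one-dimensional Hardy criterion. I expect the genuinely technical point to be \emph{tracking the constant so that it depends only on the variance} of $\gamma$ and not on the length of $I$ or the shape of the density; this is handled by explicit estimates on one-dimensional log-concave densities (monotonicity of $e^{-V}$ away from its peak, and comparison with the extremal exponential and uniform profiles). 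Once this one-dimensional inequality is established, substituting the localized needle $\gamma$ back into the finite system of integral inequalities produces a contradiction, which proves the isoperimetric inequality and hence Theorem~\ref{t1.4}. A variant of the argument bypasses the Cheeger detour and applies localization directly to the quadratic Poincar\'e inequality for $g$, reducing at once to the one-dimensional Poincar\'e inequality above; the constant-tracking obstacle is the same in either route.
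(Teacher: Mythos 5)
Note first that the paper does not prove Theorem \ref{t1.4} at all: it is quoted as a known result with references to Bobkov and Kannan--Lov\'asz--Simonovits, so there is no internal proof to compare with. Your outline is essentially the argument of those cited sources: a Cheeger-type isoperimetric inequality for log-concave measures with constant $h\ge c\bigl(\int|x-x_0|^2\,d\mu\bigr)^{-1/2}$, obtained by localization and a one-dimensional analysis, followed by the Maz'ya--Cheeger passage from isoperimetry to the Poincar\'e inequality. The Cheeger step and the one-dimensional log-concave estimates you invoke are standard and correct, so the skeleton is the right one.

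One step as you state it would not go through literally, and it is exactly the delicate point. You impose ``finitely many'' integral constraints (mass of $A$, mass of its complement, thinness of the boundary strip, and control of the second moment) and then assert that localization yields a segment with a log-concave weight. But the $p$-constraint localization theorem (Theorem \ref{t1.5} of the paper) only reduces to log-concave measures supported on affine pieces of dimension at most $p$; with three or four constraints you land in dimension $3$ or $4$, not on a needle, and the one-dimensional Muckenhoupt/Hardy analysis no longer applies directly. The classical Lov\'asz--Simonovits bisection lemma does produce genuine needles, but it accommodates only two sign constraints, so the failure of the isoperimetric inequality and the moment information must be packed into two test functions (this is precisely how the cited references arrange it, with the moment of the original measure frozen as a constant inside the test functions). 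Relatedly, your one-dimensional inequality is phrased with the needle's own variance $\mathrm{Var}_\gamma$; this closes the contradiction only if the needle inherits $\mathrm{Var}_\gamma\le\int|x-x_0|^2\,d\mu$, i.e. only if the moment bound is among the constraints actually transported to the needle. A needle produced from the mass and boundary constraints alone need not satisfy this: it is not the restriction of $\mu$ to a segment but carries an extra affine-power weight, and its variance has no a priori relation to the second moment of $\mu$. So the approach is the standard one and is sound, but the constraint bookkeeping in the localization step has to be carried out as in the cited sources rather than as sketched.
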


We also need the following result of Fradelizi and Gu\'edon
that generalizes the localization lemma from \cite{LS, KLS}.

\begin{theorem}[Localization lemma with $p$ constraints, see \cite{FrGue}]\label{t1.5}
Let $K$ be a compact convex set in $\mathbb{R}^n$,
$f_i\colon K\to \mathbb{R}$, $1\le i\le p$.
Assume that all functions $f_i$ are upper semi-continuous.
Let $P_{f_1,\ldots, f_p}$ be the set of all log-concave measures with support in $K$
such that
$$
\int f_id\mu\ge0, \ i=1,\ldots,p.
$$
Let $\Phi\colon P(K)\to\mathbb{R}$ be a convex
upper semi-continuous function, where $P(K)$
is the space of all Borel probability  measures on $K$
equipped with the weak topology.
Then $\sup\limits_{\mu\in P_{f_1,\ldots, f_p}}\Phi(\mu)$
is attained  on  log-concave measures $\mu$
such that the smallest affine subspace containing the support of $\mu$
is of dimension not greater than $p$.
\end{theorem}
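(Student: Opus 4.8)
The plan is to deduce the statement from Bauer's maximum principle and Milman's theorem, together with a perturbation (``needle splitting'') argument; the only genuinely geometric input is a description of the extremal elements of the constraint set. Write $\mathcal{M}:=P_{f_1,\ldots,f_p}$ and recall that $P(K)$, equipped with the weak topology, is a compact metrizable convex set. Let $\widehat{\mathcal{M}}\subset P(K)$ be the closed convex hull of $\mathcal{M}$, which is then compact and convex. First I would check that $\mathcal{M}$ itself is weakly closed: a weak limit of log-concave measures is log-concave (the Borell inequality $\mu(tA+(1-t)B)\ge\mu(A)^t\mu(B)^{1-t}$ passes to the limit on open and on compact sets), and, since each $f_i$ is upper semicontinuous and hence bounded above on $K$, the functional $\mu\mapsto\int f_i\,d\mu$ is upper semicontinuous, so the inequalities $\int f_i\,d\mu\ge0$ survive weak limits. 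Consequently $\overline{\mathcal{M}}=\mathcal{M}$.

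Next I would invoke Bauer's maximum principle: the convex upper semicontinuous function $\Phi$ attains its maximum over the compact convex set $\widehat{\mathcal{M}}$ at an extreme point $\mu^\ast$ of $\widehat{\mathcal{M}}$. By Milman's partial converse to the Krein--Milman theorem, every extreme point of $\overline{\mathrm{conv}}(\mathcal{M})$ lies in $\overline{\mathcal{M}}=\mathcal{M}$, so $\mu^\ast\in\mathcal{M}$. Comparing suprema, $\sup_{\widehat{\mathcal{M}}}\Phi=\Phi(\mu^\ast)\le\sup_{\mathcal{M}}\Phi\le\sup_{\widehat{\mathcal{M}}}\Phi$, hence $\mu^\ast$ realizes $\sup_{\mathcal{M}}\Phi$; moreover $\mu^\ast$, being extreme in $\widehat{\mathcal{M}}\supseteq\mathcal{M}$, cannot be written as a nontrivial convex combination of two distinct elements of $\mathcal{M}$. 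It therefore suffices to prove the following structural claim: if $\mu\in\mathcal{M}$ is supported on an affine subspace of dimension $m\ge p+1$, then $\mu$ is such a nontrivial combination.

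To prove the claim, let $V$ be the convex function with $\mu=e^{-V}\,dx$ on the supporting $m$-dimensional affine subspace (identified with $\mathbb{R}^m$), let $b$ be the barycenter of $\mu$, and for $a\in\mathbb{R}^m$ put $\psi_a(x):=(a,x-b)$, so that $\int\psi_a\,d\mu=0$. For $|\varepsilon|$ small enough the affine function $1+\varepsilon\psi_a$ is strictly positive on the compact set $K$, so $\mu_\varepsilon^a:=(1+\varepsilon\psi_a)\,d\mu$ is a probability measure with the same support as $\mu$, and it is log-concave because $-\log(1+\varepsilon\psi_a)+V$ is convex, being the sum of $V$ and minus the logarithm of a positive affine function. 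Among the $p$ constraints only those with $\int f_i\,d\mu=0$ are relevant (the others stay positive for small $\varepsilon$); there are at most $p$ of them, and for each such $i$ one has $\int f_i\,d\mu_\varepsilon^a=\varepsilon(a,v_i)$ with $v_i:=\int f_i(x)(x-b)\,d\mu(x)$, so such a constraint is preserved for both signs of $\varepsilon$ as soon as $a\perp v_i$. Since there are at most $p<m$ such orthogonality conditions, there is a nonzero $a\in\mathbb{R}^m$ satisfying all of them; then $\psi_a$ is a nonconstant affine function, $\mu_\varepsilon^a$ and $\mu_{-\varepsilon}^a$ are distinct elements of $\mathcal{M}$, and $\mu=\tfrac12\mu_\varepsilon^a+\tfrac12\mu_{-\varepsilon}^a$. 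This proves the claim, and hence the theorem.

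I expect the main obstacle to be the preservation of log-concavity under the perturbation: this is exactly what forces the perturbing density to be \emph{affine} (so that its logarithm is concave) and what makes compactness of $K$ indispensable, since on an unbounded support no nonconstant affine multiplier keeps the density nonnegative. A secondary point requiring care is the soft functional-analytic setup — verifying that $\mathcal{M}$ is weakly closed, that the upper semicontinuity of the $f_i$ goes in the direction needed for the constraints to pass to weak limits, and that Bauer's principle and Milman's theorem indeed apply to $P(K)$ in the weak topology.
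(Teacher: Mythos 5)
Your argument is correct, but note that there is no in-paper proof to compare it with: the paper imports Theorem \ref{t1.5} from Fradelizi--Gu\'edon \cite{FrGue} and never proves it. Measured against that source, your route is genuinely different and more elementary. Fradelizi and Gu\'edon work with the whole scale of $s$-concave measures and obtain a complete description of the extreme points of the constrained sets (from which the dimensional bound for $p$ constraints follows), whereas you prove only the dimension bound, and only for the log-concave case, via Bauer's maximum principle, Milman's converse to Krein--Milman, and the multiplicative perturbation $d\mu\mapsto(1\pm\varepsilon\psi_a)\,d\mu$ with $\psi_a$ affine and centered. The perturbation step is exactly where your argument is tied to $s=0$: it relies on the fact that the logarithm of a positive affine function is concave, so a positive affine factor preserves log-concavity; for general $s$-concave measures this device is not available, which is why the original proof is heavier. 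What you lose in generality and in structural information about the extremal measures is irrelevant for this paper, which only uses the dimension bound (with $p=3$) for log-concave measures, so your proof would serve as a self-contained substitute for the citation. The steps you leave as routine are indeed fillable, but they are the ones to write out carefully if you expand this: closedness of $P_{f_1,\ldots,f_p}$ needs Borell's characterization (so that a weak limit satisfying the multiplicative inequality again has a log-concave density on the affine hull of its support, matching the paper's definition); $f_i\in L^1(\mu)$ for $\mu$ in the constraint set follows from $f_i$ being bounded above on $K$ together with $\int f_i\,d\mu\ge 0$, which is what makes the vectors $v_i$ well defined; the log-concavity of the perturbed density should be phrased with $V$ extended by $+\infty$ outside the convex support so that $V-\log(1+\varepsilon\psi_a)$ is convex on the whole affine hull; and the distinctness of $\mu_{\varepsilon}^a$ and $\mu_{-\varepsilon}^a$ uses that $\mu$ is absolutely continuous on its $m$-dimensional affine hull, so the nonconstant affine function $\psi_a$ is nonzero on a set of positive $\mu$-measure.
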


\section{Sufficient conditions for fractional smoothness of measures on the real line}

In this section we provide a Malliavin-type condition for
the density of a measure on the real line to belong
to the Nikol'skii--Besov class.
We also prove several estimates for different distances between
measures satisfying this condition.
In this section we assume that the parameter $\alpha$ belongs to $(0, 1]$.

The proofs of Lemma \ref{lem2.1} and Lemma \ref{lem2.2} below
can be found in \cite{BKZ}.

\begin{lemma}\label{lem2.1}
Let $\nu$ be a probability Borel measure on the real line.
Assume that for every function
$\varphi\in C_b^\infty(\mathbb{R})$ with $\|\varphi\|_\infty\le1$
one has
$$
\int\varphi'd\nu\le C\|\varphi'\|_\infty^{1-\alpha}.
$$
Then
$$
\|\nu_h-\nu\|_{\rm TV}\le 2^{1-\alpha}C|h|^\alpha\quad \forall\, h\in\mathbb{R}.
$$
\end{lemma}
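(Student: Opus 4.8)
The plan is to estimate $\|\nu_h - \nu\|_{\rm TV}$ directly from its definition as a supremum over test functions, and to reduce that supremum to the hypothesis by a smoothing argument. Fix $h \in \mathbb{R}$; by symmetry we may assume $h > 0$. By definition,
$$
\|\nu_h - \nu\|_{\rm TV} = \sup\Bigl\{ \int \psi \, d(\nu_h - \nu) : \psi \in C_b^\infty(\mathbb{R}),\ \|\psi\|_\infty \le 1 \Bigr\},
$$
and $\int \psi \, d(\nu_h - \nu) = \int \bigl(\psi(x+h) - \psi(x)\bigr) \nu(dx)$ after a change of variables. So it suffices to bound $\int \bigl(\psi(x+h) - \psi(x)\bigr)\nu(dx)$ by $2^{1-\alpha} C h^\alpha$ for every such $\psi$.

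The key device is to write the increment $\psi(x+h) - \psi(x)$ using an auxiliary function obtained by averaging. For a parameter $\delta \in (0, h]$ to be chosen, consider, for each $x$, the function $\varphi$ built so that $\varphi'$ captures the increment: e.g. set
$$
\varphi(x) = \frac{1}{\delta} \int_0^\delta \bigl( \psi(x + s) - \tfrac12 \bigr)\, ds \quad\text{(or a similar mollified primitive)},
$$
which still satisfies $\|\varphi\|_\infty \le 1$ when $\|\psi\|_\infty \le 1$, while $\varphi'(x) = \delta^{-1}(\psi(x+\delta) - \psi(x))$, so $\|\varphi'\|_\infty \le 2/\delta$. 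Then $\int \varphi' \, d\nu = \delta^{-1}\int (\psi(x+\delta)-\psi(x))\,\nu(dx)$, and the hypothesis gives $\int (\psi(\cdot + \delta) - \psi)\, d\nu \le C \delta (2/\delta)^{1-\alpha} = 2^{1-\alpha} C \delta^\alpha$. To pass from a shift by $\delta$ to a shift by $h$, telescope: choose $\delta = h/N$ for an integer $N$, write $\psi(x+h) - \psi(x) = \sum_{k=0}^{N-1}(\psi(x + (k+1)\delta) - \psi(x + k\delta))$, apply the previous bound to each term (each $\psi(\cdot + k\delta)$ is again admissible), and sum to get $\int(\psi(\cdot+h) - \psi)\,d\nu \le N \cdot 2^{1-\alpha} C \delta^\alpha = 2^{1-\alpha} C N^{1-\alpha} h^\alpha$. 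This is only good if we can take $N = 1$; indeed choosing $N = 1$, i.e. $\delta = h$ directly, already yields $\int (\psi(\cdot+h) - \psi)\, d\nu \le 2^{1-\alpha} C h^\alpha$, which is exactly what we want. Taking the supremum over $\psi$ finishes the proof.

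The main obstacle is the construction of the auxiliary function $\varphi$: it must lie in $C_b^\infty(\mathbb{R})$ with $\|\varphi\|_\infty \le 1$, yet have $\varphi'$ equal to (a multiple of) the discrete increment $\psi(\cdot + h) - \psi(\cdot)$, or close enough to it that the error is controlled. A clean way is to fix a smooth probability density $\eta$ supported in $[0,1]$ and set $\varphi(x) = \int_{\mathbb{R}} \Psi(x - h t)\, \eta(t)\, dt$ where $\Psi$ is chosen with $\Psi' = \psi - c$ for a suitable constant $c$ making $\|\varphi\|_\infty \le 1$; one should check $\varphi \in C_b^\infty$, the sup bound, and that $\int \varphi' \, d\nu$ reproduces $\int(\psi(\cdot+h)-\psi)\,d\nu$ up to the factor $h$. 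Once $\varphi$ is in place the estimate is immediate, so the technical care is entirely in this reduction. (Since the paper attributes this lemma to \cite{BKZ}, I would also simply cite that source for the detailed verification.)
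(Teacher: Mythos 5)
Your main construction is correct, and it is essentially the argument of the cited source \cite{BKZ} (the paper itself does not prove this lemma but refers to that reference): for $h>0$ and $\psi\in C_b^\infty(\mathbb{R})$ with $\|\psi\|_\infty\le1$, take $\varphi(x)=h^{-1}\int_0^h\psi(x+s)\,ds=h^{-1}\int_x^{x+h}\psi(t)\,dt$, so that $\varphi\in C_b^\infty(\mathbb{R})$, $\|\varphi\|_\infty\le\|\psi\|_\infty\le1$, $\varphi'(x)=h^{-1}(\psi(x+h)-\psi(x))$, $\|\varphi'\|_\infty\le 2/h$; the hypothesis then gives $\int(\psi(\cdot+h)-\psi)\,d\nu\le Ch(2/h)^{1-\alpha}=2^{1-\alpha}C|h|^{\alpha}$, and taking the supremum over $\psi$ (plus the symmetry reduction to $h>0$) finishes the proof. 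Two blemishes: (i) the ``$-\tfrac{1}{2}$'' inside your average should simply be dropped --- with it one only gets $\|\varphi\|_\infty\le 3/2$, so the hypothesis would not apply directly with the clean constant, whereas the plain average of $\psi$ already has sup norm at most $1$; (ii) the alternative sketched in your last paragraph, taking $\Psi$ with $\Psi'=\psi-c$, does not work in general, because a bounded smooth $\psi$ need not admit any constant $c$ for which $\psi-c$ has a bounded primitive (e.g.\ $\psi$ behaving like $(1+|t|)^{-1/2}$); fortunately that detour, like the telescoping digression you already discard by taking $N=1$, is unnecessary.
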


\begin{remark}\label{rem2.1}{\rm
Note that the condition
$$
\|\nu_h-\nu\|_{\rm TV}\le C|h|^\alpha
$$
is equivalent to the property that the measure
$\nu$ possesses a density from the Nikol'skii--Besov class
$B_{1,\infty}^\alpha$.}
\end{remark}

\begin{lemma}\label{lem2.2}
Let $\nu, \sigma$ be a pair of probability Borel measures on the real line such that
$$
\|\nu_h-\nu\|_{\rm TV}\le C_\nu|h|^\alpha,
\quad \|\sigma_h-\sigma\|_{\rm TV}\le C_\sigma|h|^\alpha
$$
for some number $\alpha>0$.
Then
$$
\|\sigma-\nu\|_{\rm TV}\le C(\nu,\sigma)\|\sigma - \nu\|_{\rm FM}^{\frac{\alpha}{1+\alpha}},
$$
where
$$
 C(\nu,\sigma)=2+(C_\sigma+C_\nu)(2\pi)^{-1/2}\int e^{-\frac{t^2}{2}}|t|^\alpha dt.
 $$
\end{lemma}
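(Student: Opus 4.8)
The plan is to estimate $\int_{\mathbb R}\varphi\,d(\sigma-\nu)$ for an arbitrary $\varphi\in C_b^\infty(\mathbb R)$ with $\|\varphi\|_\infty\le1$ and then to take the supremum, the idea being to interpolate between the two shift hypotheses by means of Gaussian mollification. Let $g_1(t):=(2\pi)^{-1/2}e^{-t^2/2}$, and for $\delta>0$ put $g_\delta(x):=\delta^{-1}g_1(x/\delta)$ and $\varphi_\delta:=\varphi*g_\delta$. First I would split
$$\int\varphi\,d(\sigma-\nu)=\int(\varphi-\varphi_\delta)\,d(\sigma-\nu)+\int\varphi_\delta\,d(\sigma-\nu),$$
and bound the mollification error $\int(\varphi-\varphi_\delta)\,d(\sigma-\nu)$ using the hypotheses on $\sigma$ and $\nu$, and the smooth term $\int\varphi_\delta\,d(\sigma-\nu)$ using the Fortet--Mourier distance together with the bound $\|\varphi_\delta'\|_\infty\le\delta^{-1}$.

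For the mollification error, since $\int_{\mathbb R}g_1=1$ one has $\varphi(x)-\varphi_\delta(x)=\int_{\mathbb R}\bigl(\varphi(x)-\varphi(x-\delta t)\bigr)g_1(t)\,dt$. Integrating in $x$ with respect to $\sigma$ and interchanging the order of integration (legitimate since $\varphi$ is bounded, $g_1\in L^1(\mathbb R)$ and $\sigma$ is a probability measure) gives
$$\int(\varphi-\varphi_\delta)\,d\sigma=\int_{\mathbb R}g_1(t)\Bigl(\int\varphi(x)\,\sigma(dx)-\int\varphi(x-\delta t)\,\sigma(dx)\Bigr)dt.$$
The inner difference equals $\int\varphi\,d(\sigma-\sigma_{-\delta t})$, whose absolute value does not exceed $\|\sigma_{-\delta t}-\sigma\|_{\rm TV}\le C_\sigma|\delta t|^{\alpha}$ by the hypothesis and $\|\varphi\|_\infty\le1$; hence $\bigl|\int(\varphi-\varphi_\delta)\,d\sigma\bigr|\le C_\sigma\delta^{\alpha}(2\pi)^{-1/2}\int e^{-t^2/2}|t|^{\alpha}\,dt$, and the same estimate holds for $\nu$ with $C_\nu$. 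Thus the mollification error is at most $B\delta^{\alpha}$, where $B:=(C_\sigma+C_\nu)(2\pi)^{-1/2}\int e^{-t^2/2}|t|^{\alpha}\,dt$.

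For the smooth term, $\varphi_\delta\in C_b^\infty(\mathbb R)$ with $\|\varphi_\delta\|_\infty\le\|\varphi\|_\infty\le1$, and since $\varphi_\delta'=\varphi*g_\delta'$ we get $\|\varphi_\delta'\|_\infty\le\int_{\mathbb R}|g_\delta'(x)|\,dx=\delta^{-1}\int_{\mathbb R}|g_1'(x)|\,dx=\delta^{-1}\sqrt{2/\pi}\le\delta^{-1}$. Assuming $\delta\le1$, the function $\delta\varphi_\delta$ then satisfies $\|\delta\varphi_\delta\|_\infty\le1$ and $\|(\delta\varphi_\delta)'\|_\infty\le1$, so it is admissible in the definition of the Fortet--Mourier distance, whence $\int\varphi_\delta\,d(\sigma-\nu)\le\delta^{-1}\|\sigma-\nu\|_{\rm FM}$. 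Combining the two bounds and taking the supremum over $\varphi$, for every $\delta\in(0,1]$ we obtain
$$\|\sigma-\nu\|_{\rm TV}\le\delta^{-1}\|\sigma-\nu\|_{\rm FM}+B\delta^{\alpha}.$$
If $\|\sigma-\nu\|_{\rm FM}\ge1$ nothing more is needed, since $\|\sigma-\nu\|_{\rm TV}\le2\le2\|\sigma-\nu\|_{\rm FM}^{\alpha/(1+\alpha)}$; otherwise I would choose $\delta=\|\sigma-\nu\|_{\rm FM}^{1/(1+\alpha)}\in(0,1]$, which makes the two terms on the right equal to $\|\sigma-\nu\|_{\rm FM}^{\alpha/(1+\alpha)}$ multiplied by $1$ and by $B$, respectively, giving $\|\sigma-\nu\|_{\rm TV}\le(1+B)\|\sigma-\nu\|_{\rm FM}^{\alpha/(1+\alpha)}$. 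In both cases the right-hand side is at most $(2+B)\|\sigma-\nu\|_{\rm FM}^{\alpha/(1+\alpha)}=C(\nu,\sigma)\|\sigma-\nu\|_{\rm FM}^{\alpha/(1+\alpha)}$, as claimed.

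I do not expect a serious obstacle here; this is a routine mollification/interpolation scheme. The points requiring care are the Fubini interchange in the mollification-error term, the step of rewriting $\int\bigl(\varphi(x)-\varphi(x-\delta t)\bigr)\,\sigma(dx)$ as the pairing of $\varphi$ with a translate of $\sigma$ so that the shift hypothesis applies, and the elementary computation $\int_{\mathbb R}|g_1'(x)|\,dx=\sqrt{2/\pi}\le1$, which is what guarantees that $\delta\varphi_\delta$ is genuinely admissible for the Fortet--Mourier functional when $\delta\le1$.
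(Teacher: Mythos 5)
Your proof is correct: the Fubini step, the shift identification $\int\varphi(x-\delta t)\,\sigma(dx)=\int\varphi\,d\sigma_{-\delta t}$, the bound $\|\varphi_\delta'\|_\infty\le\delta^{-1}\sqrt{2/\pi}$, and the optimization $\delta=\|\sigma-\nu\|_{\rm FM}^{1/(1+\alpha)}$ (with the trivial cases $\|\sigma-\nu\|_{\rm FM}\ge1$ and $=0$ handled separately) all check out and yield exactly the stated constant $2+(C_\sigma+C_\nu)(2\pi)^{-1/2}\int e^{-t^2/2}|t|^\alpha\,dt$. The paper itself only cites \cite{BKZ} for this lemma, and the Gaussian moment appearing in the constant shows that the intended argument is precisely this Gaussian-mollification interpolation between the total variation and Fortet--Mourier distances, so your route is essentially the same.
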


\begin{lemma}\label{lem2.3}
Let $\nu$ be a Borel probability  measure on the real line.
Assume that for every function $\varphi\in C_b^\infty(\mathbb{R})$ with $\|\varphi\|_\infty\le1$
one has
$$
\int\varphi'd\nu\le C\|\varphi'\|_\infty^{1-\alpha}.
$$
Then, for every Borel set $A$, the following estimate holds:
$$
\nu(A)\le C\lambda(A)^\alpha,
$$
where $\lambda$ is the standard Lebesgue measure on the real line.
Moreover, if $\rho_\nu$ is the density of  $\nu$,
then $\rho_\nu\in L^p(\lambda)$ whenever $1<p<\frac{1}{1-\alpha}$ and one has
$$
\|\rho_\nu\|_{L^p(\lambda)}\le \biggl(p(p-1)^{-1}
+ p\Bigl(\frac{1}{1-\alpha}-p\Bigr)^{-1}\biggr)^{1/p}C^{\frac{1}{\alpha}(1-1/p)}.
$$
\end{lemma}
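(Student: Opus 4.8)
The plan is to first establish the measure-theoretic estimate $\nu(A)\le C\lambda(A)^\alpha$ for all Borel sets, and then read off the $L^p$-bound for the density by a layer-cake computation.

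For the first part I would test the hypothesis against a function adapted to a given \emph{open} set. Fix an open set $U\subset\mathbb{R}$ with $\delta:=\lambda(U)<\infty$ (if $\lambda(A)=\infty$ the claimed bound is vacuous). Consider the Lipschitz function $\varphi(t)=\tfrac1\delta\lambda\bigl(U\cap(-\infty,t]\bigr)$, which increases from $0$ to $1$ and has Lipschitz constant $1/\delta$, so that $\varphi'=\tfrac1\delta\mathrm{I}_U$ almost everywhere. Mollifying, $\varphi_\varepsilon:=\varphi*\eta_\varepsilon\in C_b^\infty(\mathbb{R})$ satisfies $\|\varphi_\varepsilon\|_\infty\le1$ and $\|\varphi_\varepsilon'\|_\infty\le\|\varphi'\|_\infty\le1/\delta$, so the hypothesis gives $\int\varphi_\varepsilon'\,d\nu\le C\delta^{-(1-\alpha)}$. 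Since $\varphi_\varepsilon'=\tfrac1\delta(\mathrm{I}_U*\eta_\varepsilon)\ge0$ and $\liminf_{\varepsilon\to0}(\mathrm{I}_U*\eta_\varepsilon)(x)\ge\mathrm{I}_U(x)$ pointwise (the value is $1$ at every $x\in U$ once $\varepsilon$ is small), Fatou's lemma yields $\tfrac1\delta\nu(U)\le C\delta^{-(1-\alpha)}$, i.e. $\nu(U)\le C\lambda(U)^\alpha$. For an arbitrary Borel set $A$ with $\lambda(A)<\infty$, outer regularity of Lebesgue measure gives open sets $U_n\supseteq A$ with $\lambda(U_n)\to\lambda(A)$, whence $\nu(A)\le\nu(U_n)\le C\lambda(U_n)^\alpha\to C\lambda(A)^\alpha$.

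The point I want to stress is that one must attach a \emph{single} test function to the whole set $U$: handling $U=\bigsqcup_j I_j$ interval by interval and summing would only give $\nu(U)\le C\sum_j\lambda(I_j)^\alpha$, which for $\alpha\le1$ is weaker than the asserted $C\lambda(U)^\alpha$; similarly the translation estimate of Lemma~\ref{lem2.1} alone only produces the exponent $\alpha/(1+\alpha)$. The function $\varphi(t)=\tfrac1{\lambda(U)}\lambda(U\cap(-\infty,t])$, whose derivative has sup-norm $1/\lambda(U)$ irrespective of how $U$ is spread out, is exactly what is needed to recover the sharp exponent $\alpha$ and the sharp constant $C$.

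For the second part, the bound $\nu(A)\le C\lambda(A)^\alpha$ forces $\nu\ll\lambda$, so the density $\rho_\nu$ exists with $\int\rho_\nu\,d\lambda=1$, and Chebyshev gives $\lambda(\rho_\nu>s)\le 1/s$ for $s>0$. Applying the first part to $A=\{\rho_\nu>s\}$ (of finite Lebesgue measure) gives $s\,\lambda(\rho_\nu>s)\le\nu(\rho_\nu>s)\le C\lambda(\rho_\nu>s)^\alpha$, hence the weak-type bound $\lambda(\rho_\nu>s)\le(C/s)^{1/(1-\alpha)}$. Writing $q:=1/(1-\alpha)$ and using
$$
\int\rho_\nu^p\,d\lambda=p\int_0^\infty s^{p-1}\lambda(\rho_\nu>s)\,ds,
$$
I would split the integral at $t_0=C^{1/\alpha}$, using $\lambda(\rho_\nu>s)\le1/s$ on $(0,t_0)$ and $\lambda(\rho_\nu>s)\le(C/s)^q$ on $(t_0,\infty)$, and compute the two elementary integrals $p\int_0^{t_0}s^{p-2}ds=\tfrac{p}{p-1}t_0^{p-1}$ and $pC^q\int_{t_0}^\infty s^{p-1-q}ds=\tfrac{p}{q-p}C^qt_0^{p-q}$. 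The hypothesis $1<p<q$ guarantees convergence at both endpoints, and the choice $t_0=C^{1/\alpha}$ makes both powers of $C$ equal to $C^{(p-1)/\alpha}$, giving $\|\rho_\nu\|_{L^p(\lambda)}^p\le\bigl(\tfrac{p}{p-1}+\tfrac{p}{q-p}\bigr)C^{(p-1)/\alpha}$; since $(p-1)/(p\alpha)=\tfrac1\alpha(1-1/p)$ and $q-p=\tfrac1{1-\alpha}-p$, this is precisely the asserted estimate.

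The main obstacle is the first part: recovering the sharp Hölder exponent $\alpha$ (rather than the $\alpha/(1+\alpha)$ produced by either an interval decomposition or the translation bound) and the sharp constant $C$ requires testing the hypothesis directly against the single function $\varphi$ associated with the whole open set, together with a careful mollification and a Fatou passage to the limit. Once $\nu(A)\le C\lambda(A)^\alpha$ is established, the rest is routine.
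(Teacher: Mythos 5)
Your proof is correct and follows essentially the same route as the paper: you apply the hypothesis to a normalized antiderivative whose derivative approximates $\mathrm{I}_A$ (the paper does this via $\Phi(s)=\int_{-\infty}^s\varphi(t)\,dt$ for general test functions and then says ``by approximation'', while you instantiate it directly for open sets with mollification, Fatou, and outer regularity), and the second part is the identical weak-type estimate plus layer-cake computation with the same splitting point $\tau=C^{1/\alpha}$. The only difference is that you spell out the approximation step the paper leaves implicit.
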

\begin{proof}
For any $\varphi\in C_0^\infty(\mathbb{R})$ with
$\|\varphi\|_\infty \le1$ we have
$$
\int\varphi d\nu = \int \Phi'd\nu,
$$
where
$$
\Phi(s) := \int_{-\infty}^s \varphi(t)dt.
$$
Note that
$$
|\Phi(s)|\le\int|\varphi(t)|dt.
$$
By the assumptions of the lemma
$$
\biggl(\int|\varphi(t)|dt\biggr)^{-1}
\int \Phi'(s) \nu(ds)\le C\biggl(\int|\varphi(t)|dt\biggr)^{-1+\alpha},
$$
so
$$
\int\varphi d\nu \le C \biggl(\int|\varphi(t)|dt\biggr)^{\alpha}.
$$
Now the first estimate of the lemma can be obtained by approximation.
Let us prove the second part.
Note that
$$
t\lambda(\rho_\nu\ge t)\le \int_{\{\rho_\nu\ge t\}}\rho_\nu d\lambda\le 1.
$$
Applying the first part of the lemma we get
$$
t\lambda(\rho_\nu\ge t)
\le\int_{\{\rho_\nu\ge t\}}\rho_\nu d\lambda = \nu(\rho_\nu\ge t)
\le C\lambda(\rho_\nu\ge t)^\alpha,
$$
therefore,
$$
\lambda(\rho_\nu\ge t)\le t^{-\frac{1}{1-\alpha}}C^{\frac{1}{1-\alpha}}.
$$
Using these estimates and the Fubini theorem, we obtain
\begin{multline*}
\int\rho_\nu^pd\lambda=
p\int_0^\infty t^{p-1}\lambda(\rho_\nu\ge t)dt=
p\biggl(\int_0^\tau + \int_\tau^\infty\biggr)t^{p-1}\lambda(rho_\nu\ge t)dt\\
\le p\int_0^\tau t^{p-2}dt + C^{\frac{1}{1-\alpha}}p
\int_\tau^\infty t^{p-1-\frac{1}{1-\alpha}}dt=
p(p-1)^{-1}\tau^{p-1} + C^{\frac{1}{1-\alpha}}p\Bigl(\frac{1}{1-\alpha}-p\Bigr)^{-1}\tau^{p-\frac{1}{1-\alpha}}.
\end{multline*}
Taking now $\tau = C^{1/\alpha}$, we get
$$
\int\rho_\nu^pd\lambda\le
\Bigl(p(p-1)^{-1} + p\Bigl(\frac{1}{1-\alpha}-p\Bigr)^{-1}\Bigr)C^{(p-1)/\alpha}.
$$
The lemma is proved.
\end{proof}

\begin{remark}\label{rem2.2}
{\rm
Similarly to the previous lemma one can prove that for a function $f\in L^1(\lambda)$ such that
for every function $\varphi\in C_b^\infty(\mathbb{R})$ with $\|\varphi\|_\infty\le1$
$$
\int\varphi' fd\lambda \le C\|\varphi'\|_\infty^{1-\alpha}
$$
one has
$$
\|f\|_{L^p(\lambda)}\le \biggl(p(p-1)^{-1} + p\Bigl(\frac{1}{1-\alpha}-p\Bigr)^{-1}\biggr)^{1/p}\|f\|_{L^1(\lambda)}^{1- \frac{1}{\alpha}(1-1/p)}C^{\frac{1}{\alpha}(1-1/p)}
$$
whenever $1<p<\frac{1}{1-\alpha}$.
Thus,
if $\nu, \sigma$ is a pair of Borel
probability measures on the real line such that for every function
$\varphi\in C_b^\infty(\mathbb{R})$ with $\|\varphi\|_\infty\le1$
one has
$$
\int\varphi'd\nu\le C_\nu\|\varphi'\|_\infty^{1-\alpha}, \quad \int\varphi'd\sigma\le C_\sigma\|\varphi'\|_\infty^{1-\alpha}
$$
and $\rho_\nu, \rho_\sigma$ are their densities, then
$$
\|\rho_\nu - \rho_\sigma\|_{L^p(\lambda)} \le
\biggl(p(p-1)^{-1} + p\Bigl(\frac{1}{1-\alpha}-p\Bigr)^{-1}\biggr)^{1/p}\|\sigma-\nu\|_{\rm TV}^{1- \frac{1}{\alpha}(1-1/p)}(C_\nu + C_\sigma)^{\frac{1}{\alpha}(1-1/p)}
$$
for $1<p<\frac{1}{1-\alpha}$.
}
\end{remark}

\section{Estimates for the Skorohod derivatives of isotropic log-concave measures}

As it has already been said in the introduction, here we obtain an estimate
on the variation of the Skorohod derivative of an isotropic log-concave
measure in terms of its isotropic constant.
The idea is quite simple. If a log-concave measure can be viewed
as a generalization of the uniform distribution on a convex compact set,
then the norm of the Skorohod derivative of this measure can be viewed as a generalization
of the volume of the projection of the convex set.
To estimate the volume of the projection we can embed our convex set into some ball.
Thus, to estimate
the Skorohod derivative of the isotropic log-concave measure
on $\mathbb{R}^n$ with a density $\rho$ and the unit isotropic constant we will
find constants $c_n>0$ and $\alpha_n>0$ such that
$$
\rho(x)\le c_n e^{-\alpha_n|x|}.
$$
Throughout this section we assume that  $\tau\in(0, \infty)$.

\begin{lemma}\label{lem3.1}
Let $\mu$ be a log-concave measure on $\mathbb{R}^n$
with a density $\rho$.
Let $m_\rho = \max \rho$ and let
$$
K=\{x\in\mathbb{R}^n: \rho(x)\ge e^{-\tau}m_\rho\}.
$$
Then for the volume of the body $K$ the following estimate holds true:
$$
1\le m_\rho c_n(\tau)|K|,
$$
where
$\displaystyle c_n(\tau)=n\int_1^{\infty}t^{n-1}e^{-\tau t}dt+1$.
\end{lemma}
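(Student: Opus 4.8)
The plan is to bound $\rho$ pointwise by an exponential adapted to the body $K$ and then integrate in ``polar coordinates'' relative to $K$. First, since translating $\rho$ changes neither $\max\rho$ nor the Lebesgue volume of $K$ nor the log-concavity of $\mu$, and since a log-concave density on $\mathbb{R}^n$ attains its maximum, I may assume that the maximum is attained at the origin, i.e. $\rho(0)=m_\rho$. Then $0\in K$, and $K$ is closed, bounded (because $\rho\to 0$ at infinity, so every super-level set $\{\rho\ge c\}$, $c>0$, is bounded) and convex; hence its Minkowski functional $\|\cdot\|_K$ satisfies $\{x:\|x\|_K\le r\}=rK$ for every $r\ge 0$, and in particular $x\notin K\iff \|x\|_K>1$.

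The key step is to show that $\rho(x)\le m_\rho e^{-\tau\|x\|_K}$ for every $x\notin K$. Fix such an $x$ and set $t:=\|x\|_K\in(1,+\infty]$. For any $s\in(1,t)$ the point $x/s=(1-1/s)\cdot 0+(1/s)\,x$ is a convex combination of $0$ and $x$ with $\|x/s\|_K=t/s>1$, so $x/s\notin K$ and therefore $\rho(x/s)<e^{-\tau}m_\rho$. On the other hand, writing $\rho=e^{-V}$ with $V$ convex, log-concavity gives $\rho(x/s)\ge\rho(0)^{1-1/s}\rho(x)^{1/s}=m_\rho^{1-1/s}\rho(x)^{1/s}$. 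Combining the two inequalities yields $m_\rho^{1-1/s}\rho(x)^{1/s}<e^{-\tau}m_\rho$, i.e. $\rho(x)<m_\rho e^{-\tau s}$; letting $s\uparrow t$ gives the claimed bound (and in the degenerate case $t=+\infty$ it forces $\rho(x)=0$).

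It then remains to integrate. Splitting $\int_{\mathbb{R}^n}\rho\,dx=\int_K\rho\,dx+\int_{\mathbb{R}^n\setminus K}\rho\,dx$ and using $\rho\le m_\rho$ on $K$ together with the pointwise bound on $\mathbb{R}^n\setminus K=\{\|x\|_K>1\}$, we obtain
$$
1\le m_\rho|K|+m_\rho\int_{\{\|x\|_K>1\}}e^{-\tau\|x\|_K}\,dx.
$$
Since $|\{x:\|x\|_K\le r\}|=r^n|K|$, the image of Lebesgue measure under $x\mapsto\|x\|_K$ has density $n r^{n-1}|K|$ on $(0,\infty)$ (the remaining directions, if any, carry $\|\cdot\|_K=+\infty$ and contribute nothing to the integral of $e^{-\tau\|x\|_K}$), so that $\int_{\{\|x\|_K>1\}}e^{-\tau\|x\|_K}\,dx=n|K|\int_1^\infty t^{n-1}e^{-\tau t}\,dt$. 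Plugging this in gives $1\le m_\rho|K|\bigl(1+n\int_1^\infty t^{n-1}e^{-\tau t}\,dt\bigr)=m_\rho c_n(\tau)|K|$, which is the assertion.

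I expect the only genuinely delicate point to be bookkeeping around degeneracies: that the maximum of $\rho$ is actually attained (so the normalization $\rho(0)=m_\rho$ is legitimate), and that the set identities for $\|\cdot\|_K$ stay valid when $0$ lies on $\partial K$ rather than in its interior, in which case $\|\cdot\|_K$ is $+\infty$ in some directions. Both follow routinely from the structure of log-concave densities (upper semicontinuity of $\rho$, boundedness of its super-level sets, convexity of $V$); the rest is the direct computation above.
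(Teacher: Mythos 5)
Your proof is correct and follows essentially the same route as the paper: both rest on the pointwise bound $\rho(x)\le m_\rho e^{-\tau\|x\|_K}$ outside $K$, obtained from convexity of $V$ along rays from the maximizer, followed by integrating this bound over $\mathbb{R}^n\setminus K$. The only cosmetic difference is that you compute $\int_{\{\|x\|_K>1\}}e^{-\tau\|x\|_K}dx=n|K|\int_1^\infty t^{n-1}e^{-\tau t}dt$ via the identity $|\{\|x\|_K\le r\}|=r^n|K|$ (layer-cake), while the paper does the same computation in polar coordinates; your handling of the degenerate directions and attainment of the maximum is a welcome extra care, not a deviation.
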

\begin{proof}
Without loss of generality we can assume that
$m_\rho = \rho(0)$.
Since $\mu$ is a probability measure, we have
$$
1=\int\rho(x)dx=\int_{\mathbb{R}^n\setminus K}\rho(x)dx+\int_K\rho(x)dx.
$$
By using  polar coordinates we can estimate the first integral in this sum as follows:
\begin{multline*}
\int_{\mathbb{R}^n\setminus K}\rho(x)dx=
\int_{S^{n-1}}\int_{\|\varphi\|_K^{-1}}^{\infty}r^{n-1}\rho(r\varphi)dr\sigma_{n-1}(d\varphi)\\=
\int_{S^{n-1}}\|\varphi\|_K^{-n}\int_1^{\infty}t^{n-1}\rho(t\|\varphi\|_K^{-1}\varphi)dt\sigma_{n-1}(d\varphi).
\end{multline*}
Since $\rho(x)=e^{-V(x)}$ with a convex function $V$, we have
$$
V(t\|\varphi\|_K^{-1}\varphi)\ge tV(\|\varphi\|_K^{-1}\varphi)+(1-t)V(0)
$$
whenever $t\ge 1$.
So,
$$
\rho(t\|\varphi\|_K^{-1}\varphi)\le \rho(0)\Bigl(\frac{\rho(\|\varphi\|_K^{-1}\varphi)}{\rho(0)}\Bigr)^t=\rho(0)e^{-\tau t}.
$$
Thus,
\begin{multline*}
\int_{S^{n-1}}\|\varphi\|_K^{-n}\int_1^{\infty}t^{n-1}\rho(t\|\varphi\|_K^{-1}\varphi)dt\sigma_{n-1}(d\varphi) \le
\rho(0)\int_{S^{n-1}}\|\varphi\|_K^{-n}\int_1^{\infty}t^{n-1}e^{-\tau t}dt\sigma_{n-1}(d\varphi)\\=
\rho(0)(c_n(\tau)-1)n^{-1}\int_{S^{n-1}}\|\varphi\|_K^{-n}\sigma_{n-1}(d\varphi)\\=
\rho(0)(c_n(\tau)-1)\int_{S^{n-1}}\int_0^{\|\varphi\|_K^{-1}}r^{n-1}dr\sigma_{n-1}(d\varphi)=
\rho(0)(c_n(\tau)-1)|K|.
\end{multline*}
Hence,
$1\le\rho(0)c_n(\tau)|K|$,
and the lemma is proved.
\end{proof}

The proof of the following lemma is a combination of the proof of Theorem
4.1 from \cite{KLS} and the previous lemma.

\begin{lemma}\label{lem3.2}
Let $\mu$ be an isotropic log-concave measure on $\mathbb{R}^n$
with a density $\rho$ and the isotropic constant $1$.
Let $m_\rho = \max \rho$ and let
$$
K=\{x\in\mathbb{R}^n\colon\ \rho(x)\ge e^{-\tau}m_\rho\}.
$$
Then for every point $x\in K$ we have
$$
|x|^2\le c_n(\tau)(n+1)^2e^\tau,
$$
where
$\displaystyle c_n(\tau)=n\int_1^{\infty}t^{n-1}e^{-\tau t}dt+1$.
\end{lemma}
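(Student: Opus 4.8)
The plan is to transplant the argument giving the corresponding bound for a convex body in isotropic position (Theorem~4.1 of \cite{KLS}), the new ingredient being Lemma~\ref{lem3.1}: it replaces the relation ``density $=1/|K|$'' available for a uniform measure by the two-sided bound $e^{-\tau}m_\rho\le\rho\le m_\rho$ on $K$ together with $m_\rho|K|\ge c_n(\tau)^{-1}$.

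First I would use isotropy with $L_\mu=1$: the barycentre of $\mu$ is the origin, and $\int_{\mathbb{R}^n}(y,u)^2\,\mu(dy)=1$ for every unit vector $u$. Since the asserted bound is the same for every $x\in K$, it suffices to take $x\in K$ of maximal norm; put $R:=|x|$, $u:=x/R$ (the case $R=0$ is trivial). Maximality of $|x|$ forces the orthogonal projection of $K$ onto $\mathbb{R}u$ to be a segment $[-b,R]$ with $0<b\le R$, and, by the Brunn--Minkowski inequality, the section function $A(t):=|K\cap(tu+u^{\perp})|_{n-1}$ has a concave $(n-1)$-st root on $[-b,R]$; in particular $A(t)\ge A_0(1-t/R)^{n-1}$ for $t\in[0,R]$, where $A_0:=A(0)$ is the central section.

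The geometric heart of the \cite{KLS} argument is the solid cone $\Gamma:=\mathrm{conv}\bigl(\{x\}\cup(K\cap u^{\perp})\bigr)\subseteq K$, whose section along $u$ at height $t\in[0,R]$ is a translate of $(1-t/R)(K\cap u^{\perp})$, so that $\int_{\Gamma}(y,u)^2\,dy = 2A_0R^3/\bigl(n(n+1)(n+2)\bigr)$. Since $\rho\ge e^{-\tau}m_\rho$ on $\Gamma$, restricting the isotropy identity to $\Gamma$ gives
$$
1=\int_{\mathbb{R}^n}(y,u)^2\,\mu(dy)\ \ge\ e^{-\tau}m_\rho\int_{\Gamma}(y,u)^2\,dy\ =\ \frac{2e^{-\tau}m_\rho A_0R^3}{n(n+1)(n+2)},
$$
an upper bound for $m_\rho A_0R^3$. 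For the reverse inequality I would show $|K|=\int_{-b}^{R}A(t)\,dt$ is at most an absolute multiple of $A_0R$ — using that for a body with centroid at the origin the central section is comparable to the largest section, which follows from the concavity of $A^{1/(n-1)}$ together with the vanishing first moment (with the mild caveat that it is the first moment of the marginal $\rho_u$ of $\mu$, not of $A$, that vanishes, a point absorbed via $\rho_u\ge e^{-\tau}m_\rho A$) — and then invoke Lemma~\ref{lem3.1} to get $m_\rho A_0R\ge (Cc_n(\tau))^{-1}$. Dividing the two estimates cancels $m_\rho A_0$ and yields $|x|^2=R^2\le C_1(n)\,c_n(\tau)\,e^{\tau}$ with $C_1(n)$ polynomial in $n$; carrying out the resulting one-dimensional optimization as in \cite{KLS} gives $C_1(n)=(n+1)^2$ and simultaneously covers the finitely many small dimensions.

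The step I expect to be the main obstacle is the bound $|K|\lesssim A_0R$ — equivalently, a one-dimensional estimate to the effect that a density on $\mathbb{R}$ with concave $(n-1)$-st root, mean zero, and support reaching out to $R$ has variance bounded below in terms of $R$ and $n$. This is the only genuinely non-routine ingredient, and it is where the precise polynomial factor $(n+1)^2$ is generated; the cone computation, the restriction-to-$\Gamma$ device, and the passage from volume to mass (Lemma~\ref{lem3.1}) are routine.
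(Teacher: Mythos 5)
There is a genuine gap, and it sits exactly where you flagged ``the main obstacle,'' plus one step earlier. Your transplantation of the KLS Theorem~4.1 argument tacitly assumes that the origin lies well inside $K$: you need the projection of $K$ onto $\mathbb{R}u$ to be $[-b,R]$ with $b>0$ (so that the central section $K\cap u^\perp$ is nonempty), and you need the central section $A_0$ to be comparable to the largest section of $K$. In KLS this is legitimate because there the body itself is in isotropic position, so its \emph{own} centroid is at the origin. Here, however, $0$ is the barycentre of $\mu$, not of $K$, and the level set $K=\{\rho\ge e^{-\tau}m_\rho\}$ need not contain the barycentre at all when $\tau$ is small (Fradelizi's bound $\rho(0)\ge e^{-n}m_\rho$ only guarantees $0\in K$ for $\tau\ge n$). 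Concretely, let $\mu$ be the product of $n$ i.i.d.\ copies of the density $e^{-(t+1)}\mathrm{I}_{[-1,\infty)}(t)$ (mean $0$, variance $1$, hence isotropic with constant $1$). For $\tau<1$ the set $K$ is the small simplex $\{t_i\ge-1,\ \sum_i(t_i+1)\le\tau\}$ near the vertex $(-1,\dots,-1)$; the farthest point is that vertex, $u=-(1,\dots,1)/\sqrt n$, and $(y,u)\ge(n-\tau)/\sqrt n>0$ on all of $K$, so $K\cap u^\perp=\emptyset$, $A_0=0$, and both the cone construction and the inequality $|K|\lesssim A_0R$ collapse. Your proposed repair --- deducing control of $A$ from $\int t\,\rho_u(t)\,dt=0$ via $\rho_u\ge e^{-\tau}m_\rho A$ --- cannot work, because for small $\tau$ almost all of the mass of $\rho_u$ lies outside the projection of $K$, so the mean-zero condition carries no information about where $K$ sits. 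So the comparison ``central section $\asymp$ largest section'' is not merely non-routine here; as stated it is false.

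For contrast, the paper's proof avoids sections, cones and any reference to where the barycentre sits relative to $K$. For an arbitrary $v\in K$ and any $\theta$, one writes $\int_K(x,\theta)^2dx$ in polar coordinates centred at $v$, and completing a square in the integrand yields the pointwise lower bound $(n+1)^{-2}(v,\theta)^2|K|$; combining this with $\rho\ge e^{-\tau}m_\rho$ on $K$, the isotropy identity $\int(x,\theta)^2\rho\,dx=|\theta|^2$, and Lemma~\ref{lem3.1} (which supplies $m_\rho|K|\ge c_n(\tau)^{-1}$), and finally taking $\theta=v$, gives $|v|^2\le c_n(\tau)(n+1)^2e^\tau$ directly. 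If you want to salvage your route, you would have to replace the central section through the origin by a section through a point of $K$ (e.g.\ through the mode or the centroid of $K$) and then separately control the distance from that point to the origin --- at which point the paper's direct computation is both shorter and sharper.
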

\begin{proof}
Let $v\in K$.
Note that
$$
\int_K(x, \theta)^2dx=
\int_{K-v}(v+u, \theta)^2du=
\int_{S^{n-1}}\int_0^{\|\varphi\|_{K-v}^{-1}}r^{n-1} (v+r\varphi, \theta)^2dr\sigma_{n-1}(d\varphi),
$$
where in the second equality we pass to polar coordinates.
Calculating the inner integral in $r$ we have
\begin{multline*}
\int_{S^{n-1}}(n^{-1}\|\varphi\|_{K-v}^{-n}(v,\theta)^2+
2(n+1)^{-1}\|\varphi\|_{K-v}^{-n-1}(v,\theta)(\varphi,\theta)+
(n+2)^{-1}\|\varphi\|_{K-v}^{-n-2}(\varphi,\theta)^2)\sigma_{n-1}(d\varphi)\\=
\int_{S^{n-1}}\|\varphi\|_{K-v}^{-n}n^{-1}\Bigl(\frac{\sqrt{n(n+2)}}{n+1}(v,\theta)+
\sqrt{\frac{n}{n+2}}\|\varphi\|_{K-v}^{-1}(\varphi,\theta)\Bigr)^2\\+
n^{-1}(n+1)^{-2}\|\varphi\|_{K-v}^{-n}(v,\theta)^2\sigma_{n-1}(d\varphi)\ge
n^{-1}(n+1)^{-2}(v,\theta)^2\int_{S^{n-1}}\|\varphi\|_{K-v}^{-n}\sigma_{n-1}(d\varphi),
\end{multline*}
where the last inequality is valid, since
 the first term under the integral sign is nonnegative.
In turn, the last expression is equal to
$$
(n+1)^{-2}(v,\theta)^2\int_{S^{n-1}}\int_0^{\|\varphi\|_{K-v}^{-1}}r^{n-1}dr\sigma_{n-1}(d\varphi)=
(n+1)^{-2}(v,\theta)^2|K|.
$$
Thus, using the estimate for the volume of the body $K$ from the previous lemma,
we obtain
\begin{multline*}
|\theta|^2 =\int(x, \theta)^2\rho(x)dx
\ge e^{-\tau}m_\rho\int_K(x, \theta)^2dx
\\
\ge
e^{-\tau}m_\rho(n+1)^{-2}|K|(v,\theta)^2\ge
\frac{e^{-\tau}}{c_n(\tau)(n+1)^2}(v,\theta)^2.
\end{multline*}
Now we can take $\theta = v$ and get the estimate $|v|^2\le c_n(\tau)(n+1)^2e^\tau$.
\end{proof}

\begin{lemma}\label{lem3.3}
For every $n\in\mathbb{N}$,
there are constants $c_n>0$ and $\alpha_n>0$ depending only on  $n$
 such that, for every
isotropic log-concave measure $\mu$ on $\mathbb{R}^n$
with a density $\rho$ and the isotropic constant $1$,
the following inequality holds true:
$$
\rho(x)\le c_n e^{-\alpha_n|x|}.
$$
\end{lemma}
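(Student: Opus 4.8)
I would deduce this from Theorem \ref{t1.1}, from Lemma \ref{lem3.2} applied at $\tau=1$, and from convexity of $V:=-\log\rho$. The key observation is that Lemma \ref{lem3.2} controls the super-level sets $K_c:=\{x\colon\rho(x)\ge e^{-c}m_\rho\}$ (where $m_\rho:=\max\rho$) by balls of radius of order $e^{c/2}$, which by itself would only yield polynomial decay of $\rho$; convexity of $V$ improves this to \emph{linear} growth of $K_c$ in $c$, and that is exactly what produces exponential decay.

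\emph{Step 1.} Since $L_\mu=1$, Theorem \ref{t1.1} gives $m_\rho\le C_n^n$. Put $R_n:=\sqrt{c_n(1)(n+1)^2e}$, which is finite because $c_n(1)=n\int_1^\infty t^{n-1}e^{-t}\,dt+1\le n!+1$. By Lemma \ref{lem3.2} with $\tau=1$, every $x\in K_1$ satisfies $|x|^2\le c_n(1)(n+1)^2e$, that is, $K_1\subseteq\overline{B}(0,R_n)$. Let $x_*$ be a point at which $\rho$ attains its maximum; then $V(x_*)=-\log m_\rho$ and $x_*\in K_1$, so $|x_*|\le R_n$. I would keep $x_*$ explicit rather than translating it to $0$, because translation would destroy isotropy and hence the hypothesis of Lemma \ref{lem3.2}.

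\emph{Step 2.} For $c\ge1$ and $x\in K_c$, set $z:=(1-\tfrac1c)x_*+\tfrac1c x$. Convexity of $V$ gives
$$
V(z)\le\Bigl(1-\tfrac1c\Bigr)V(x_*)+\tfrac1c V(x)\le\Bigl(1-\tfrac1c\Bigr)(-\log m_\rho)+\tfrac1c(-\log m_\rho+c)=-\log m_\rho+1,
$$
so $z\in K_1$, and therefore $x=x_*+c(z-x_*)\in x_*+c(K_1-x_*)\subseteq\overline{B}(0,R_n+2cR_n)$, using $K_1-x_*\subseteq\overline{B}(0,2R_n)$. Contrapositively, if $c\ge1$ and $|x|>R_n+2cR_n$ then $\rho(x)<e^{-c}m_\rho$. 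Hence, for $|x|>3R_n$, letting $c\uparrow\frac{|x|-R_n}{2R_n}$ yields
$$
\rho(x)\le m_\rho\,e^{-\frac{|x|-R_n}{2R_n}}\le C_n^n\,e^{1/2}\,e^{-|x|/(2R_n)},
$$
while for $|x|\le3R_n$ one has $\rho(x)\le m_\rho\le C_n^n\le C_n^n\,e^{3/2}\,e^{-|x|/(2R_n)}$ because $|x|/(2R_n)\le 3/2$. Thus the lemma holds with $\alpha_n:=1/(2R_n)$ and $c_n:=C_n^n e^{3/2}$.

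The only genuine obstacle is the conceptual one noted above: extracting exponential rather than merely polynomial decay from the geometric data in Lemma \ref{lem3.2}. Once one notices the dilation bound $K_c\subseteq x_*+c(K_1-x_*)$, the rest — invoking Theorem \ref{t1.1} to bound $m_\rho$, carrying the mode $x_*$ through the estimates because isotropy forbids recentering, and splitting into the two ranges $|x|>3R_n$ and $|x|\le3R_n$ — is entirely routine.
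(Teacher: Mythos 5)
Your proof is correct and follows essentially the same route as the paper: Theorem \ref{t1.1} bounds $m_\rho$, Lemma \ref{lem3.2} with $\tau=1$ traps the level set $K_1$ (hence the mode) in a ball of radius $R_n$, and convexity of $V$ along segments from the mode gives linear growth of $V$, i.e. exponential decay of $\rho$, with the same case split between $|x|$ large and $|x|$ bounded. The only difference is bookkeeping: the paper quantifies the growth via the Minkowski functional of $B_{2r}-x_0$, whereas you package it as the level-set dilation $K_c\subseteq x_*+c(K_1-x_*)$ and optimize over $c$, which yields the same rate up to constants.
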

\begin{proof}
Let $x_0$ be  a point such that $\rho(x_0) = \max \rho = m_\rho$,
$$
K=\{x\in\mathbb{R}^n\colon
 \rho(x)\ge e^{-\tau}m_\rho\},\quad r = (n+1)\sqrt{c_n(\tau)e^\tau},
$$
where $c_n(\tau)$ is the constant from the previous lemma.
By the previous lemma we have the inclusion $K\subset B_r$,
where $B_r$ is the ball of radius $r$ centered at the origin.

Suppose first that $x\not\in B_{2r}$, equivalently, $\|x-x_0\|_{B_{2r}-x_0}\ge1$,
where $\|\cdot\|_{B_{2r}-x_0}$ is the Minkowski functional of the set $B_{2r}-x_0$.
Note that
$$
x = (1-\|x-x_0\|_{B_{2r}-x_0})x_0 + \|x-x_0\|_{B_{2r}-x_0}\Bigl(x_0+\frac{x-x_0}{\|x-x_0\|_{B_{2r}-x_0}}\Bigr).
$$
Since $\mu$ is a log-concave measure, the density $\rho$ is of the form $e^{-V}$
with some convex function $V$.
Using the equality mentioned above and the convexity of the function $V$ we have
\begin{multline*}
V(x)\ge (1-\|x-x_0\|_{B_{2r}-x_0})V(x_0) + \|x-x_0\|_{B_{2r}-x_0}V
\Bigl(x_0+\frac{x-x_0}{\|x-x_0\|_{B_{2r}-x_0}}\Bigr)\\ \ge
(1-\|x-x_0\|_{B_{2r}-x_0})V(x_0) + \|x-x_0\|_{B_{2r}-x_0}(V(x_0)+\tau)
=V(x_0) + \tau \|x-x_0\|_{B_{2r}-x_0},
\end{multline*}
where the inequality follows from the fact that the point
$x_0+\frac{x-x_0}{\|x-x_0\|_{B_{2r}-x_0}}$ does not belong to the set $K$.

We now estimate $\|x-x_0\|_{B_{2r}-x_0}$.
Note that
$$
\Bigl|\frac{x-x_0}{\|x-x_0\|_{B_{2r}-x_0}}+x_0\Bigr|=2r,
$$
hence
$$
2r\|x-x_0\|_{B_{2r}-x_0}\ge |x| - (\|x-x_0\|_{B_{2r}-x_0}-1)|x_0|\ge |x| - (\|x-x_0\|_{B_{2r}-x_0}-1)r,
$$
which implies the estimate
$$
\|x-x_0\|_{B_{2r}-x_0}\ge1/3 + (3r)^{-1}|x|.
$$
Thus, we have
$$
V(x)\ge V(x_0) + \tau (1/3 + (3r)^{-1}|x|),
$$
hence,
$$
\rho(x) \le e^{-\tau/3}m_\rho e^{-\tau(3r)^{-1}|x|}\le m_\rho e^{-\tau(3r)^{-1}|x|}\le e^\tau m_\rho e^{-\tau(3r)^{-1}|x|}.
$$

We now consider the case $x\in B_{2r}$.
In this case we have
$$
\rho(x)\le m_\rho \le e^\tau m_\rho e^{-\tau(3r)^{-1}|x|}.
$$
By Theorem \ref{t1.1} one has $m_\rho\le C_n^n$
for some constant $C_n$, so, taking
$\tau = 1$, we obtain the desired estimate.
\end{proof}

\begin{theorem}\label{t3.1}
Let $\mu$ be an isotropic log-concave measure on $\mathbb{R}^n$
with a density $\rho$ and the isotropic constant $1$.
Let $h$ be a vector of unit length. Then
$$
\|D_h\mu\|\le C(n)
$$
with some constant $C(n)$ that depends only on the dimension $n$.
\end{theorem}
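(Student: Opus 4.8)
The plan is to combine Krugova's formula for the total variation of the Skorohod derivative with the pointwise exponential bound from Lemma \ref{lem3.3}. Recall that for a log-concave measure $\mu$ with density $\rho$ and a unit vector $h$, Krugova's result gives
$$
\|D_h\mu\| = 2\int_{\langle h\rangle^\bot}\max_t \rho(x+th)\,dx.
$$
So it suffices to bound the right-hand side by a dimensional constant. By Lemma \ref{lem3.3}, since $\mu$ is isotropic with isotropic constant $1$, we have $\rho(y)\le c_n e^{-\alpha_n|y|}$ for all $y\in\mathbb{R}^n$, with $c_n,\alpha_n$ depending only on $n$. For fixed $x\in\langle h\rangle^\bot$ and any $t$, one has $|x+th|^2 = |x|^2 + t^2 \ge |x|^2$ because $h\perp x$, hence $\max_t\rho(x+th)\le c_n e^{-\alpha_n|x|}$.

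Plugging this in,
$$
\|D_h\mu\| \le 2c_n\int_{\langle h\rangle^\bot} e^{-\alpha_n|x|}\,dx = 2c_n\int_{\mathbb{R}^{n-1}} e^{-\alpha_n|x|}\,dx,
$$
and the last integral is a finite constant depending only on $n$ and $\alpha_n$ (it equals $2c_n\,\omega_{n-1}\int_0^\infty r^{n-2}e^{-\alpha_n r}\,dr = 2c_n\,\omega_{n-1}(n-2)!\,\alpha_n^{-(n-1)}$, where $\omega_{n-1}$ is the surface area of the unit sphere in $\mathbb{R}^{n-1}$, with the obvious modification when $n=1$). This produces the required bound $\|D_h\mu\|\le C(n)$. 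One should also note in passing that the hypotheses guarantee $\rho\in BV$: the pointwise exponential decay of a log-concave density together with convexity of $V=-\log\rho$ forces the total variation of $\rho$ along every line to be finite, so $D_h\mu$ is a genuine bounded signed measure and Krugova's formula applies.

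I do not expect a serious obstacle here: the two ingredients (Krugova's identity and Lemma \ref{lem3.3}) do essentially all the work, and the only point requiring a moment of care is the elementary observation that restricting to the hyperplane $\langle h\rangle^\bot$ only decreases $|x+th|$ relative to its value on the line, so that the exponential bound survives the maximization over $t$ and then integrates to something finite. The case $n=1$ is degenerate (the hyperplane is a point) and handled directly: $\|D_h\mu\| = 2\max_t\rho(t)\le 2c_1$.
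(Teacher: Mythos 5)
Your proof is correct and follows essentially the same route as the paper: Krugova's identity for $\|D_h\mu\|$ combined with the exponential pointwise bound of Lemma \ref{lem3.3}, with the observation that $|x+th|\ge|x|$ for $x\perp h$ so the bound survives the maximization over $t$ and integrates over the hyperplane to a constant depending only on $n$. No issues.
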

\begin{proof}
Without loss of generality we can assume that $h=e_1$ is the first basis vector.
According to Krugova's result \cite{Krug},
$$
\frac{1}{2}\|D_h\mu\|=\int_{\mathbb{R}^{n-1}}
\max_t\rho(t,x_2,\ldots, x_n)dx_2\ldots dx_n.
$$
Now we can use the estimate from Lemma \ref{lem3.3}:
$$
\int_{\mathbb{R}^{n-1}}\max\limits_t\rho(t,x_2,\ldots, x_n)dx_2\ldots dx_n\le
c_n\int_{\mathbb{R}^{n-1}}\exp\Bigl(-\alpha_n\Bigl(\sum_{i=2}^nx_i^2\Bigr)^{1/2}\Bigr)
dx_2\ldots dx_n,
$$
where the last expression depends only on the dimension $n$, and the lemma is proved.
\end{proof}

\section{Verification of the fractional smoothness sufficient condition}

This section is devoted to obtaining a technical statement in Theorem \ref{t4.1}.
First, using estimates of the Skorohod derivatives from the previous section,
the Poincar\'e inequality, and the Carbery--Wright inequality we
obtain an estimate that depends on dimension. Then we use the localization techniques
to make this estimate dimension-free.

\begin{lemma}\label{lem4.1}
Let $\mu$ be a log-concave measure on $\mathbb{R}^n$ with a density $\rho$
with respect to Lebesgue measure.
Then, for every $d\in\mathbb{N}$,
 there are constants $c_1(d), c_2(d)$ depending only on $d$ such that,
for every polynomial $f$  of degree $d$ on $\mathbb{R}^n$,
every function $\varphi\in C_b^\infty(\mathbb{R})$ with
$\|\varphi\|_\infty\le1$, and for every vector~$e$
of unit length the following inequality holds true:
$$
\int\varphi'(f)d\mu\le \Bigl(c_1(d)\|\partial_ef\|_2^{-1/(d-1)}+c_2(d)\|D_e\mu\|_{\rm TV}\Bigr)\|\varphi'\|_\infty^{1-1/d}.
$$
\end{lemma}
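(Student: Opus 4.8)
The plan is to slice $\mathbb{R}^n$ along the direction $e$, reduce the assertion to a one-dimensional integration by parts on each slice, and control the resulting quantities by the Carbery--Wright inequality (Theorem~\ref{t1.3}), the reverse H\"older inequality (Theorem~\ref{t1.2}), and Krugova's formula for $\|D_e\mu\|_{\rm TV}$. One may assume $e=e_n$ and write $x=(y,t)$ with $y\in\mathbb{R}^{n-1}$, $t\in\mathbb{R}$; put $\rho_y(t):=\rho(y,t)$ and $P_y(t):=f(y,t)$, so that for a.e.\ $y$ the function $\rho_y$ is log-concave (hence unimodal and tending to $0$ at $\pm\infty$) and $P_y$ is a polynomial in $t$ of degree $\le d$. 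If $\partial_e f\equiv 0$ or $D_e\mu$ is not a finite measure the inequality is trivial, so I may assume $\rho$ has a density of bounded variation along $e$; by Fubini, $\int\varphi'(f)\,d\mu=\int_{\mathbb{R}^{n-1}}J(y)\,dy$ with $J(y)=\int_{\mathbb{R}}\varphi'(P_y(t))\rho_y(t)\,dt$.

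Fix $\delta>0$ and split each slice over $\{|P_y'|\le\delta\}$ and $\{|P_y'|>\delta\}$. On the first set I bound the integrand by $\|\varphi'\|_\infty\rho_y$; integrating in $y$ the total contribution is $\|\varphi'\|_\infty\,\mu(|\partial_e f|\le\delta)$, and since $\partial_e f$ is a polynomial of degree $\le d-1$, Theorem~\ref{t1.3} (with ``$d$'' taken to be $d-1$) together with Theorem~\ref{t1.2} to pass from $\|\partial_e f\|_1$ to $\|\partial_e f\|_2$ gives $\mu(|\partial_e f|\le\delta)\le c(d)\delta^{1/(d-1)}\|\partial_e f\|_2^{-1/(d-1)}$. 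The set $\{|P_y'|>\delta\}$ is a union of at most $2d$ open intervals on each of which $P_y'$ keeps a constant sign of modulus $>\delta$, so $P_y$ is strictly monotone there; on such an interval $I$, writing $\varphi'(P_y)P_y'=\frac{d}{dt}\varphi(P_y)$ and integrating by parts (in the Lebesgue--Stieltjes sense, since $\rho_y$ is only $BV$),
$$
\int_I\varphi'(P_y)\rho_y\,dt=\Bigl[\varphi(P_y)\tfrac{\rho_y}{P_y'}\Bigr]_{\partial I}-\int_I\frac{\varphi(P_y)}{P_y'}\,d\rho_y-\int_I\varphi(P_y)\rho_y\,d\!\Bigl(\tfrac1{P_y'}\Bigr).
$$
Here $|\varphi|\le1$ and $|P_y'|\ge\delta$ on $\overline I$ (the boundary terms at infinite endpoints vanish), so the boundary part is $\le 2\delta^{-1}\max_t\rho_y$ per interval; the first integral is $\le\delta^{-1}$ times the variation of $\rho_y$ over $I$, which summed over the intervals is $\le 2\delta^{-1}\max_t\rho_y$ by unimodality; and the last integral is $\le\max_t\rho_y$ times the total variation of $1/P_y'$ over $\{|P_y'|>\delta\}$. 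This last quantity is $\le c(d)\delta^{-1}$: the function $1/P_y'$ is monotone between consecutive zeros of $P_y''$ (at most $d-2$ of them) and takes values in $(-\delta^{-1},\delta^{-1})$ on the region in question. Altogether $|J(y)|\le\|\varphi'\|_\infty\int_{\{|P_y'|\le\delta\}}\rho_y\,dt+c(d)\delta^{-1}\max_t\rho_y$.

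Integrating in $y$ and using Krugova's identity $\int_{\mathbb{R}^{n-1}}\max_t\rho_y\,dy=\tfrac12\|D_e\mu\|_{\rm TV}$ gives
$$
\int\varphi'(f)\,d\mu\le c(d)\,\|\varphi'\|_\infty\,\delta^{1/(d-1)}\|\partial_e f\|_2^{-1/(d-1)}+\tfrac12 c(d)\,\delta^{-1}\|D_e\mu\|_{\rm TV},
$$
and the choice $\delta=\|\varphi'\|_\infty^{-(d-1)/d}$ produces the stated bound. The main obstacle is that a naive integration by parts carried out directly in $\mathbb{R}^n$ (dividing $\varphi'(f)$ by $\partial_e f$ after a smooth cutoff of $\partial_e f$ at level $\delta$) leaves a term of the form $\int\varphi(f)\,\frac{\partial_e^2 f}{(\partial_e f)^2}\,d\mu$ carrying a factor $\delta^{-2}$, which cannot be absorbed into the desired estimate. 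Slicing removes this obstruction, because on each slice $\frac{\partial_e^2 f}{(P_y')^2}=-\frac{d}{dt}\!\bigl(\tfrac1{P_y'}\bigr)$ is an exact derivative and is therefore governed by the \emph{total variation} of $1/P_y'$ rather than by its size; estimating that total variation by $c(d)\delta^{-1}$ using that $P_y'$ is a polynomial of degree $\le d-1$ is the technical heart of the proof, and it is what yields the exponent $1/(d-1)$. The remaining care is bookkeeping: reading the integration by parts in the $BV$ sense, and dispatching the degenerate cases ($\partial_e f\equiv0$, a slice on which the degree of $P_y$ in $t$ drops below $d$, $\|D_e\mu\|_{\rm TV}=\infty$, or $d=1$) for which the claimed inequality is either trivial or only improves.
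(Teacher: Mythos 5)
Your argument is correct, and it rests on the same three pillars as the paper's proof of Lemma \ref{lem4.1} — the Carbery--Wright inequality (Theorem \ref{t1.3}, plus Theorem \ref{t1.2} to pass from $\|\partial_ef\|_1$ to $\|\partial_ef\|_2$) for the region where $\partial_ef$ is small, Krugova's formula combined with the piecewise monotonicity of $t\mapsto\partial_ef(x+te)$ along lines for the main term, and optimization in the cutoff — but the integration by parts is implemented differently. The paper introduces no hard cutoff: it writes $1=\frac{(\partial_ef)^2}{(\partial_ef)^2+\varepsilon}+\frac{\varepsilon}{(\partial_ef)^2+\varepsilon}$, integrates by parts globally in $\mathbb{R}^n$ against the Skorohod derivative $D_e\mu$ with the smooth weight $\partial_ef/((\partial_ef)^2+\varepsilon)$, handles the small-derivative part via a layer-cake computation of $\varepsilon\int((\partial_ef)^2+\varepsilon)^{-1}d\mu$, and bounds the second-derivative terms $\int|\partial_e^2f|/((\partial_ef)^2+\varepsilon)\,d\mu$ by slicing along $e$ and substituting $\tau=\partial_ef(x+te)$ on each of the at most $d-1$ monotone pieces of the derivative along a line, which gives $\le\tfrac12 d\pi\,\varepsilon^{-1/2}\|D_e\mu\|_{\rm TV}$; its choice $\varepsilon=\|\varphi'\|_\infty^{-2+2/d}$ is exactly your $\delta=\|\varphi'\|_\infty^{-(d-1)/d}$ with $\delta=\sqrt\varepsilon$. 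Your version replaces the smooth weight by a hard cutoff at $|P_y'|=\delta$ and a slice-wise Lebesgue--Stieltjes integration by parts; you pay with the $BV$ bookkeeping (boundary terms, the measure $d\rho_y$, the variation of $1/P_y'$) that the smooth weight avoids, and you gain that every step is elementary one-dimensional calculus with the role of $\|D_e\mu\|_{\rm TV}$ made completely transparent through $\int\max_t\rho_y\,dy$. One correction to your closing remark: the ``naive'' smooth-cutoff integration by parts in $\mathbb{R}^n$ does \emph{not} leave an unabsorbable $\delta^{-2}$ term — the offending quantity is $\int|\partial_e^2f|/((\partial_ef)^2+\varepsilon)\,d\mu$, and the very same along-the-line change of variables you use to bound ${\rm Var}(1/P_y')$ shows it is only of order $\varepsilon^{-1/2}\|D_e\mu\|_{\rm TV}$; this is precisely the paper's route. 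Minor points only: $\{|P_y'|>\delta\}$ has at most $2d-1$ components, and the exponent $1/(d-1)$ makes the statement meaningful only for $d\ge2$, which is how the lemma is used (Theorem \ref{t4.1} assumes $d\ge2$), so your dispatching of $d=1$ as degenerate is consistent with the paper.
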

\begin{proof}
Fix a number $\varepsilon>0$.
Note that
$$
\int\varphi'(f)d\mu=\int\frac{(\partial_ef)^2}{(\partial_ef)^2+\varepsilon}\varphi'(f)d\mu+
\varepsilon\int\varphi'(f)((\partial_ef)^2+\varepsilon)^{-1}d\mu.
$$
Consider the first term:
$$
\int\frac{(\partial_ef)^2}{(\partial_ef)^2+\varepsilon}\varphi'(f)d\mu=
\int\partial_e(\varphi(f))\frac{\partial_ef}{(\partial_ef)^2+\varepsilon}d\mu.
$$
Applying the integration by parts formula to the last expression
(recall that $D_e\mu$ denotes the Skorohod derivative of the measure $\mu$
along the vector $e$) we have
\begin{multline*}
-\int\varphi(f)\Bigl[\frac{\partial^2_ef}{(\partial_ef)^2+\varepsilon}-
2\frac{(\partial_ef)^2\partial^2_ef}{((\partial_ef)^2+\varepsilon)^2}\Bigr]d\mu -
\int\varphi(f)\frac{\partial_ef}{(\partial_ef)^2+\varepsilon}d(D_e\mu)\\ \le
\int\Bigl|\frac{\partial^2_ef}{(\partial_ef)^2+\varepsilon}\Bigr|d\mu+
2\int\Bigl|\frac{(\partial_ef)^2\partial^2_ef}{((\partial_ef)^2+\varepsilon)^2}\Bigr|d\mu+
\int\Bigl|\frac{\partial_ef}{(\partial_ef)^2+\varepsilon}\Bigr|d|D_e\mu|\\=
\varepsilon^{-1/2}\biggl(\int\Bigl|\frac{\partial^2_eg}{(\partial_eg)^2+1}\Bigr|d\mu+
2\int\Bigl|\frac{(\partial_eg)^2\partial^2_eg}{((\partial_eg)^2+1)^2}\Bigr|d\mu+
\int\Bigl|\frac{\partial_eg}{(\partial_eg)^2+1}\Bigr|d|D_e\mu|\biggr),
\end{multline*}
where $g=f\varepsilon^{-1/2}$. Now let us estimate each term separately:
$$
\int\Bigl|\frac{\partial_eg}{(\partial_eg)^2+1}\Bigr|d|D_e\mu|\le1/2\|D_e\mu\|_{\rm TV},
$$
$$
\int\Bigl|\frac{\partial^2_eg}{(\partial_eg)^2+1}\Bigr|d\mu=
\int_{\langle e\rangle^\bot}\int_{\langle e\rangle}\Bigl|\frac{\partial^2_eg(x+te)}{(\partial_eg(x+te))^2+1}\Bigr|\rho(x+te)dtdx
$$
$$
\le d\int_{\langle e\rangle^\bot}\max\limits_{s}
\rho(x+se)\int_{\langle e\rangle}\Bigl|\frac{1}{\tau^2+1}\Bigr|d\tau dx=1/2d\pi\|D_e\mu\|_{\rm TV},
$$
$$
\int\Bigl|\frac{(\partial_eg)^2\partial^2_eg}{((\partial_eg)^2+1)^2}\Bigr|d\mu\le
\int\Bigl|\frac{\partial^2_eg}{(\partial_eg)^2+1}\Bigr|d\mu\le 1/2d\pi\|D_e\mu\|_{TV}.
$$
Thus,
$$
\int\frac{(\partial_ef)^2}{(\partial_ef)^2+\varepsilon}\varphi'(f)d\mu \le \varepsilon^{-1/2}2^{-1}(1+3d\pi)\|D_e\mu\|_{\rm TV}.
$$
Let $M=\|\varphi'\|_\infty$. Let us estimate the expression
$$
M^{-1}\int\varphi'(f)((\partial_ef)^2+\varepsilon)^{-1}d\mu.
$$
This expression can be estimated from above by
\begin{multline*}
\int((\partial_ef)^2+\varepsilon)^{-1}d\mu=\int_0^{1/\varepsilon}\mu(((\partial_ef)^2+\varepsilon)^{-1}>t)dt=
\int_0^{1/\varepsilon}\mu((\partial_ef)^2<1/t-\varepsilon)dt\\
=\int_0^\infty(s+\varepsilon)^{-2}\mu((\partial_ef)^2<s)ds\le
c_1d\|\partial_ef\|_2^{-1/(d-1)}\int_0^\infty(s+\varepsilon)^{-2}s^{1/(2d-2)}ds\\
=c_1d\int_0^\infty(s+1)^{-2}s^{1/(2d-2)}ds\|\partial_ef\|_2^{-1/(d-1)}\varepsilon^{-1+1/(2d-2)},
\end{multline*}
where the Carbery--Wright inequality from Theorem \ref{t1.3} was used in the last step.
Hence
$$
\int\varphi'(f)d\mu\le c_1(d)\|\partial_ef\|_2^{-1/(d-1)}M\varepsilon^{1/(2d-2)}+c_2(d)\|D_e\mu\|_{\rm TV}\varepsilon^{-1/2},
$$
where
$$
c_1(d)=cd\int_0^\infty(s+1)^{-2}s^{1/(2d-2)}ds,\quad c_2(d)=2^{-1}(1+3d\pi).
$$
Set $\varepsilon=M^{-2+2/d}$. Then
$$
\int\varphi'(f)d\mu\le\Bigl(c_1(d)\|\partial_ef\|_2^{-1/(d-1)}+c_2(d)\|D_e\mu\|_{TV}\Bigr)M^{1-1/d}.
$$
The lemma is proved.
\end{proof}

\begin{corollary}\label{c4.1}
Let $n, d\in\mathbb{N}$.
Then there is a constant $c(d,n)$ depending only on
$d$ and $n$ such that, whenever
$\mu$ is a log-concave measure on $\mathbb{R}^n$
with a density $\rho$,   $f$ is a
 polynomial of degree $d$ on $\mathbb{R}^n$,
for every function
$\varphi\in C_b^\infty(\mathbb{R})$ with $\|\varphi\|_\infty\le1$
the following inequality holds:
$$
\int\varphi'(f)d\mu\le
c(d,n)\biggl(\biggl(\int|\nabla f|^{1/(d-1)}d\mu\biggr)^{-1}+\max\limits_{|e|=1}\|D_e\mu\|_{TV}\biggr)\|\varphi'\|_\infty^{1-1/d}.
$$
\end{corollary}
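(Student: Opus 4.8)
The plan is to deduce Corollary~\ref{c4.1} from Lemma~\ref{lem4.1} by taking the unit vector $e$ there to be a suitably chosen coordinate direction. We work with $d\ge2$, as in Lemma~\ref{lem4.1} (for $d=1$ an affine image of a log-concave measure is again log-concave on the line, and the statement reduces to a standard fact). We may also assume that $f$ is non-constant: otherwise $\int|\nabla f|^{1/(d-1)}\,d\mu=0$, so the right-hand side of the claimed inequality is infinite and there is nothing to prove.

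First I would pick the coordinate direction that captures the largest share of the gradient. Pointwise one has $\sum_{i=1}^n(\partial_{e_i}f)^2=|\nabla f|^2$, so integrating against $\mu$ gives $\sum_{i=1}^n\|\partial_{e_i}f\|_2^2=\int|\nabla f|^2\,d\mu$, and hence there is an index $i$ with
$$
\|\partial_{e_i}f\|_2^2\ \ge\ \frac{1}{n}\int|\nabla f|^2\,d\mu\ >\ 0 .
$$
Fix such an $i$ and put $e=e_i$.

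The heart of the matter is a single power-mean comparison. Since $\mu$ is a probability measure and $1/(d-1)\le1\le2$, Jensen's inequality applied to the convex function $t\mapsto t^{2(d-1)}$ yields
$$
\Bigl(\int|\nabla f|^{1/(d-1)}\,d\mu\Bigr)^{2(d-1)}\ \le\ \int|\nabla f|^{2}\,d\mu .
$$
Combining this with the previous display gives $\|\partial_{e_i}f\|_2^2\ge n^{-1}\bigl(\int|\nabla f|^{1/(d-1)}\,d\mu\bigr)^{2(d-1)}$, and raising both sides to the power $-1/(2(d-1))$ (which reverses the inequality, the exponent being negative) produces
$$
\|\partial_{e_i}f\|_2^{-1/(d-1)}\ \le\ n^{1/(2(d-1))}\Bigl(\int|\nabla f|^{1/(d-1)}\,d\mu\Bigr)^{-1}.
$$

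Finally I would substitute this estimate into Lemma~\ref{lem4.1} for the vector $e=e_i$, and bound $\|D_{e_i}\mu\|_{\rm TV}\le\max_{|e|=1}\|D_e\mu\|_{\rm TV}$ in the remaining term; choosing $c(d,n):=\max\{c_1(d)\,n^{1/(2(d-1))},\,c_2(d)\}$ then gives precisely the asserted inequality. I do not expect any real obstacle: the entire argument amounts to the two elementary links ``one coordinate derivative dominates $n^{-1/2}$ of the full gradient'' and ``on a probability space the $L^2$-norm dominates the $L^{1/(d-1)}$-integral''. All the substantial analysis of this section is already carried out in Lemma~\ref{lem4.1}, and the dimension dependence introduced by the factor $n^{1/(2(d-1))}$ will be removed later by the localization technique.
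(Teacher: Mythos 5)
Your argument is correct, and it reaches the stated inequality with the admissible constant $c(d,n)=\max\{c_1(d)\,n^{1/(2(d-1))},\,c_2(d)\}$; the reduction to the non-constant case and the Jensen step $\bigl(\int|\nabla f|^{1/(d-1)}d\mu\bigr)^{2(d-1)}\le\int|\nabla f|^2d\mu$ are both sound for $d\ge2$, which is the only case in which the exponents in the statement make sense anyway. Your route differs from the paper's in how the single directional derivative of Lemma \ref{lem4.1} is related to the full gradient: you use a pigeonhole choice of the best coordinate direction, picking $e_i$ with $\|\partial_{e_i}f\|_2^2\ge n^{-1}\int|\nabla f|^2d\mu$, and then a power-mean comparison, whereas the paper keeps $e$ arbitrary, multiplies the inequality of Lemma \ref{lem4.1} by $\int|(\nabla f,e)|^{1/(d-1)}d\mu$ (using $\int|(\nabla f,e)|^{1/(d-1)}d\mu\le\|\partial_e f\|_2^{1/(d-1)}$), and then averages over $e\in S^{n-1}$, which produces the gradient via the identity $\int_{S^{n-1}}|(\nabla f,e)|^{1/(d-1)}\sigma_n(de)=C(n,d)|\nabla f|^{1/(d-1)}$ with $C(n,d)=\int_{S^{n-1}}|(e,e_1)|^{1/(d-1)}\sigma_n(de)$. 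Both deductions are elementary and yield dimension-dependent constants that are later neutralized by localization; your version is arguably a bit more economical (no spherical integral to evaluate), while the paper's averaging has the mild aesthetic advantage of not singling out a coordinate system, the constant entering through an explicit spherical mean instead of the factor $n^{1/(2(d-1))}$.
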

\begin{proof}
Multiplying the inequality from Lemma \ref{lem4.1} by $\int|(\nabla f, e)|^{1/(d-1)}d\mu$
and using the inequality
$$
\int|(\nabla f, e)|^{1/(d-1)}d\mu\le\biggl(\int|(\nabla f, e)|^2d\mu\biggr)^{1/(2d-2)},
$$
we get the estimate
$$
\int|(\nabla f, e)|^{1/(d-1)}d\mu\int\varphi'(f)d\mu\le
\biggl(c_1(d)+c_2(d)\|D_e\mu\|_{TV}\int|(\nabla f, e)|^{1/(d-1)}d\mu\biggr)\|\varphi'\|_\infty^{1-1/d}.
$$
Estimating $\|D_e\mu\|_{TV}$ by the maximum on the sphere
and integrating with respect to the standard normalized surface
 measure $\sigma_n$ on the sphere, we obtain
\begin{multline*}
\int_{S^{n-1}}\int|(\nabla f, e)|^{1/(d-1)}d\mu\sigma_n(de)\int\varphi'(f)d\mu\\ \le
\biggl(c_1(d)+
c_2(d)\max\limits_{|e|=1}\|D_e\mu\|_{TV}\int_{S^{n-1}}\int|(\nabla f, e)|^{1/(d-1)}d\mu\sigma_n(de)\biggr)\|\varphi'\|_\infty^{1-1/d}.
\end{multline*}
Note that
\begin{multline*}
\int_{S^{n-1}}\int|(\nabla f, e)|^{1/(d-1)}d\mu\sigma_n(de)=
\int\int_{S^{n-1}}|(\nabla f, e)|^{1/(d-1)}\sigma_n(de)d\mu\\=
\int|\nabla f|^{1/(d-1)}\int_{S^{n-1}}|(e,e_1)|^{1/(d-1)}\sigma_n(de)d\mu=
C(n,d)\int|\nabla f|^{1/(d-1)}d\mu,
\end{multline*}
where
$$
C(n,d) = \int_{S^{n-1}}|(e, e_1)|^{1/(d-1)}\sigma_n(de).
$$
Thus, the assertion of the corollary is true with the constant
$$
c(d,n) = \max\Bigl\{\frac{c_1(d)}{C(n,d)} , c_2(d)\Bigr\}.
$$
The corollary is proved.
\end{proof}

\begin{corollary}\label{c4.2}
Let $d,n\in \mathbb{N}$.
Then, there is a constant $C(d,n)$
depending only on  $d$ and $n$ such that,
whenever $\mu$ is an isotropic log-concave measure on $\mathbb{R}^n$
with a density $\rho$ and the unit isotropic constant and
  $f$ is a  polynomial of degree $d$ on $\mathbb{R}^n$,
for every function $\varphi\in C_b^\infty(\mathbb{R})$ with $\|\varphi\|_\infty\le1$
one has
$$
\int\varphi'(f)d\mu\le
C(d,n)\biggl(\Bigl(\int\bigl|f-\mathbb{E}f\bigr|^{1/(d-1)}d\mu\Bigr)^{-1}+1\biggr)\|\varphi'\|_\infty^{1-1/d}.
$$
\end{corollary}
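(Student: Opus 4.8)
The plan is to deduce Corollary~\ref{c4.2} from Corollary~\ref{c4.1} by specializing the latter to isotropic log-concave measures with unit isotropic constant and controlling the two ingredients on its right-hand side. The term $\max_{|e|=1}\|D_e\mu\|_{\rm TV}$ is immediately bounded by a constant depending only on $n$: this is precisely the content of Theorem~\ref{t3.1}. Hence the whole task reduces to replacing $\bigl(\int|\nabla f|^{1/(d-1)}d\mu\bigr)^{-1}$ by $\bigl(\int|f-\mathbb{E}f|^{1/(d-1)}d\mu\bigr)^{-1}$ up to a factor depending only on $d$ and $n$, i.e., to proving an inequality of the form
$$
\int|f-\mathbb{E}f|^{1/(d-1)}d\mu\le C(d,n)\int|\nabla f|^{1/(d-1)}d\mu ,
$$
after which one substitutes into Corollary~\ref{c4.1} and collects constants. (If $\nabla f\equiv 0$ both integrals vanish and there is nothing to prove, so one may assume the right-hand side is positive.)

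For this inequality I would chain three elementary facts. First, the Poincar\'e inequality of Theorem~\ref{t1.4}: since $\mu$ is isotropic with isotropic constant $1$ one has $\int x\,d\mu=0$ and $\int|x|^2d\mu=\sum_{i=1}^n\int(x,e_i)^2d\mu=n$, so Theorem~\ref{t1.4} applied to the polynomial $f$ gives $\sigma_f^2\le Rn\int|\nabla f|^2d\mu$. Second, the reverse H\"older inequality for polynomials, Theorem~\ref{t1.2}: the function $|\nabla f|^2=\sum_{i=1}^n(\partial_if)^2$ is a polynomial of degree at most $2(d-1)$, so taking $q=1$ and $p=\frac1{2(d-1)}$ (which satisfies $p<q$ because $d\ge 2$) yields $\int|\nabla f|^2d\mu\le\bigl(2(d-1)c\bigr)^{2(d-1)}\bigl(\int|\nabla f|^{1/(d-1)}d\mu\bigr)^{2(d-1)}$. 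Third, since $\mu$ is a probability measure and $\frac1{d-1}\le 1\le 2$, one has the trivial bound $\int|f-\mathbb{E}f|^{1/(d-1)}d\mu=\|f-\mathbb{E}f\|_{1/(d-1)}^{1/(d-1)}\le\|f-\mathbb{E}f\|_2^{1/(d-1)}=\sigma_f^{1/(d-1)}$. Combining the third, first and second displays in this order gives
$$
\int|f-\mathbb{E}f|^{1/(d-1)}d\mu\le\sigma_f^{1/(d-1)}\le(Rn)^{1/(2(d-1))}\Bigl(\int|\nabla f|^2d\mu\Bigr)^{1/(2(d-1))}\le 2(d-1)c\,(Rn)^{1/(2(d-1))}\int|\nabla f|^{1/(d-1)}d\mu ,
$$
which is the required inequality, with an explicit $C(d,n)=2(d-1)c\,(Rn)^{1/(2(d-1))}$.

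I do not anticipate a genuine obstacle: the proof is an assembly of results already available (Corollary~\ref{c4.1}, Theorem~\ref{t3.1}, Theorem~\ref{t1.4}, Theorem~\ref{t1.2}). The only points that need attention are bookkeeping ones: keeping the directions of the various $L^p(\mu)$-comparisons straight (Theorem~\ref{t1.2} raises the exponent, $\|\cdot\|_p\le\|\cdot\|_q$ for $p\le q$ on a probability space, and passing to reciprocals reverses inequalities), using that $|\nabla f|$ itself is not a polynomial but $|\nabla f|^2$ is, and noting that the standing assumption $d\ge 2$ of this section is exactly what makes the exponent $\frac1{2(d-1)}$ strictly less than $1$, so that Theorem~\ref{t1.2} is applicable. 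Collecting everything, the constant in Corollary~\ref{c4.2} can be taken to be $c(d,n)\max\{2(d-1)c\,(Rn)^{1/(2(d-1))},\,C(n)\}$, with $c(d,n)$ the constant from Corollary~\ref{c4.1} and $C(n)$ the one from Theorem~\ref{t3.1}.
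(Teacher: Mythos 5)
Your proof is correct and follows essentially the same route as the paper: bound $\max_{|e|=1}\|D_e\mu\|_{\rm TV}$ by Theorem \ref{t3.1}, use the Poincar\'e inequality of Theorem \ref{t1.4} together with $\int|x|^2d\mu=n$, and use Theorem \ref{t1.2} (applied to the polynomial $|\nabla f|^2$) plus the trivial moment monotonicity to pass from $\int|\nabla f|^{1/(d-1)}d\mu$ to $\int|f-\mathbb{E}f|^{1/(d-1)}d\mu$ before substituting into Corollary \ref{c4.1}. Your write-up simply spells out the moment comparisons that the paper leaves implicit.
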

\begin{proof}
By Theorem \ref{t1.4} for the log-concave measure $\mu$ we have
$$
\int\bigl(f-\mathbb{E}f\bigr)^2d\mu\le R\int|x-x_0|^2d\mu\int|\nabla f|^2d\mu,
$$
where $R$ is an absolute constant, $x_0=\int x d\mu$.
Note that
$$
\int|x-x_0|^2d\mu=\int|x|^2d\mu=n.
$$
By Theorem \ref{t3.1} we have
$\|D_e\mu\|\le C(n)$.
Using the estimates from Theorem \ref{t1.2} and Corollary~\ref{c4.1},
we obtain the desired inequality.
\end{proof}

\begin{theorem}\label{t4.1}
Let $d\ge2$.
Then, there is a constant $C(d)$ depending only on  $d$ such that, whenever
$\mu$ is a log-concave measure on $\mathbb{R}^n$,
  $f$ is a polynomial of degree $d$ on $\mathbb{R}^n$,
for every function $\varphi\in C_b^\infty(\mathbb{R})$ with $\|\varphi\|_\infty\le1$
one has
$$
\sigma_f^{1/d}\int\varphi'(f)d\mu\le
C(d)\|\varphi'\|_\infty^{1-1/d}.
$$
\end{theorem}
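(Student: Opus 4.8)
The plan is to derive Theorem~\ref{t4.1} from the finite-dimensional estimate of Corollary~\ref{c4.2} by means of the localization lemma, Theorem~\ref{t1.5}. Although the constant in Corollary~\ref{c4.2} depends on the dimension, pinning down the two scalar quantities that matter about $f$ --- its mean $\mathbb{E}f$ and its variance $\sigma_f^2$ --- costs only three constraints, so Theorem~\ref{t1.5} will produce an extremal measure supported on an affine subspace of dimension at most $3$, where the dimension-dependent constant becomes a constant depending on $d$ alone.

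First I would reduce to the case $\sigma_f=1$ (the case $\sigma_f=0$ being trivial). The asserted inequality is invariant under the substitution $f\mapsto g:=f/\sigma_f$, $\varphi\mapsto\psi:=\varphi(\sigma_f\,\cdot\,)$, because $\int\varphi'(f)\,d\mu=\sigma_f^{-1}\int\psi'(g)\,d\mu$ while $\|\psi\|_\infty=\|\varphi\|_\infty\le1$, $\|\psi'\|_\infty=\sigma_f\|\varphi'\|_\infty$ and $\sigma_g=1$; feeding the bound for $(g,\psi)$ into these relations yields the bound for $(f,\varphi)$. This renormalization is essential rather than cosmetic: the additive ``$+1$'' in Corollary~\ref{c4.2} does not scale like the main term, so without first fixing the size of $f$ the limiting step below would fail.

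Now suppose $\sigma_f=1$ and put $m=\mathbb{E}f$. For $j\in\mathbb{N}$ let $\mu_j:=\mu(\,\cdot\,\cap B_j)/\mu(B_j)$ be the normalized restriction of $\mu$ to the closed ball $B_j$ of radius $j$, which is again log-concave; set $m_j:=\int f\,d\mu_j$ and $t_j:=\int(f-m_j)^2\,d\mu_j$, so that $m_j\to m$, $t_j\to\sigma_f^2=1$ and $\int\varphi'(f)\,d\mu_j\to\int\varphi'(f)\,d\mu$ as $j\to\infty$ (the polynomials $f$, $f^2$ are $\mu$-integrable and $\varphi'(f)$ is bounded and continuous). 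I would then apply Theorem~\ref{t1.5} on $K=B_j$ with the weakly continuous linear (hence convex upper semicontinuous) functional $\Phi(\nu)=\int\varphi'(f)\,d\nu$ and the three continuous constraints $\int(f-m_j)\,d\nu\ge0$, $\int(m_j-f)\,d\nu\ge0$, $\int\bigl((f-m_j)^2-t_j\bigr)\,d\nu\ge0$. Since $\mu_j$ satisfies all three, the supremum of $\Phi$ over the corresponding class of log-concave measures is at least $\Phi(\mu_j)$ and is attained at a log-concave $\nu^*$ whose support has affine hull $L$ of dimension $k\le3$. The first two constraints force $\int f\,d\nu^*=m_j$, hence $\sigma_f^2(\nu^*)=\int(f-m_j)^2\,d\nu^*\ge t_j$; for $j$ large this is positive, so $k\ge1$ and $\nu^*$ has a nondegenerate log-concave density on $L$.

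Finally I would identify $L$ with $\mathbb{R}^k$ via an affine map $T$ carrying $\nu^*$ to isotropic position with unit isotropic constant, and set $\widetilde\nu:=\nu^*\circ T^{-1}$, $\widetilde f:=f\circ T$, a polynomial of degree at most $d$ with $\int\varphi'(\widetilde f)\,d\widetilde\nu=\int\varphi'(f)\,d\nu^*$ and $\sigma_{\widetilde f}^2(\widetilde\nu)\ge t_j$. Corollary~\ref{c4.2} on $\mathbb{R}^k$ then gives
\[
\int\varphi'(\widetilde f)\,d\widetilde\nu\le C(d,k)\Bigl(\|\widetilde f-\mathbb{E}\widetilde f\|_{1/(d-1)}^{-1/(d-1)}+1\Bigr)\|\varphi'\|_\infty^{1-1/d},
\]
while Theorem~\ref{t1.2} applied with $p=1/(d-1)\le1<2=q$ (and its absolute constant $c$) gives $\|\widetilde f-\mathbb{E}\widetilde f\|_{1/(d-1)}\ge(2cd)^{-d}\sigma_{\widetilde f}\ge(2cd)^{-d}\sqrt{t_j}$, hence $\|\widetilde f-\mathbb{E}\widetilde f\|_{1/(d-1)}^{-1/(d-1)}\le(2cd)^{d/(d-1)}t_j^{-1/(2(d-1))}$. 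Since $k\le3$ we have $C(d,k)\le\max_{1\le i\le3}C(d,i)$; combining these and letting $j\to\infty$ (so $t_j\to1$ and $\Phi(\mu_j)\to\int\varphi'(f)\,d\mu$) yields $\int\varphi'(f)\,d\mu\le C(d)\|\varphi'\|_\infty^{1-1/d}$ with $C(d)=\bigl(\max_{1\le i\le3}C(d,i)\bigr)\bigl((2cd)^{d/(d-1)}+1\bigr)$, and undoing the normalization of the second paragraph produces the stated inequality. The hard part is exactly the dimension-dependence of Corollary~\ref{c4.2}: it is defeated by the observation that only $\mathbb{E}f$ and $\sigma_f$ have to be controlled, which Theorem~\ref{t1.5} does with three constraints and therefore in dimension $\le3$. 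A minor technical point is that Theorem~\ref{t1.5} requires compactly supported measures, which is why one passes through the exhaustion $\mu_j$.
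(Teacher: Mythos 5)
Your proof is correct and takes essentially the same route as the paper: the localization lemma (Theorem~\ref{t1.5}) with three constraints reduces the dimension-dependent estimate of Corollary~\ref{c4.2} to dimension at most $3$, the Khinchin-type inequality (Theorem~\ref{t1.2}) converts the $L^{1/(d-1)}$-moment into the variance, and rescaling by $\sigma_f$ yields the stated bound. The only minor variations are that you fix the second moment as the third localization constraint and apply Theorem~\ref{t1.2} to the low-dimensional extremal measure (the paper instead constrains $\int|f|^{1/(d-1)}d\nu$ directly and applies Theorem~\ref{t1.2} to the original measure), you normalize $\sigma_f=1$ at the outset rather than at the end, and you spell out the compact-support exhaustion that the paper only asserts.
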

\begin{proof}
First, we prove the estimate
\begin{equation}\label{4.1}
\int\varphi'(f)d\mu\le
C_1(d)\biggl(\Bigl(\int\bigl|f-\mathbb{E}f\bigr|^{1/(d-1)}d\mu\Bigr)^{-1} + 1\biggr)\|\varphi'\|_\infty^{1-1/d}.
\end{equation}
Note that it is sufficient to prove this estimate only for log-concave measures with compact support.
Let us fix
a log-concave measure $\mu$ with a convex compact support $K$ and a polynomial $f$ of degree~$d$.
Without loss of generality we can assume that $\mathbb{E}f:=\int f d\mu =0$.
Let $k := \int |f|^{1/(d-1)}d\mu$.
Let $P_f$ be the set of all log-concave measures $\nu$ with support in $K$ such that
$$
\int fd\nu\ge0, \quad \int -fd\nu\ge0, \int |f|^{1/(d-1)}d\nu\ge k.
$$
Note that $\mu\in P_f$. Consider the function
$$
\Phi_f(\nu) := \int\varphi'(f)d\nu -
C_1(d)\bigl(k^{-1}+1\bigr)\|\varphi'\|_\infty^{1-1/d},
$$
where $C_1(d)=\max\{C(d,n), n=1, 2, 3\}$, $C(d,n)$ is the constant from the previous corollary.
Now we prove that $\Phi_f(\nu)\le0$ for
every polynomial $f$ of degree $d$ and every log-concave measure $\nu\in P_f$.
Due to Theorem \ref{t1.5} (with $p=3$, $f_1=f$, $f_2=-f$, $f_3=|f|^{1/(d-1)}$)
it is sufficient to prove this inequality for every polynomial $f$ of degree $d$ and
every log-concave measure $\nu\in P_f$ such that the smallest
affine subspace containing the support of $\nu$
is of dimension not greater than $3$.
For every log-concave measure $\nu$ with a density,
there is a non-degenerate linear transformation $T$ such that
the measure $\nu\circ T^{-1}$ is isotropic with the unit isotropic constant.
Since a polynomial of degree $d$ composed with a linear transformation
is a polynomial of degree~$d$, it is sufficient to prove our inequality
for every isotropic log-concave measure $\nu$
with the unit isotropic constant in dimensions at most $3$
and every polynomial $f$ of degree $d$.
By Corollary \ref{c4.2} one has $\Phi_f(\nu)\le0$
for such measures and polynomials.
Thus, $\Phi_f(\nu)\le0$ for every polynomial $f$ of degree $d$
and every log-concave measure $\nu\in P_f$,
and since $\mu\in P_f$ and $k=\int |f|^{1/(d-1)}d\mu$, we have (\ref{4.1}).
By Theorem \ref{t1.2}
$$
\int\bigl|f-\mathbb{E}f\bigr|^{1/(d-1)}d\mu\ge (4c(d-1))^{-1}\biggl(\int\bigl|f-\mathbb{E}f\bigr|^2d\mu\biggr)^{1/(2d-2)}.
$$
Let $f$ be a polynomial of degree $d$ on $\mathbb{R}^n$.
Then by inequality (\ref{4.1}) we have
$$
\int\varphi'(f\sigma_f^{-1})d\mu\le
C_1(d)\bigl(4c(d-1) + 1\bigr)\|\varphi'\|_\infty^{1-1/d}.
$$
Let $\psi(t)=\varphi(t\sigma_f^{-1})$, $C(d)=C_1(d)(4c(d-1) + 1)$. Then
$$
\int\psi'(f)d\mu=\sigma_f^{-1}\int\varphi'(f\sigma_f^{-1})d\mu\le
C(d)\sigma_f^{-1}\|\varphi'\|_\infty^{1-1/d}=
C(d)\sigma_f^{-1/d}\|\psi'\|_\infty^{1-1/d}.
$$
Thus, the theorem is now proved.
\end{proof}

\section{Properties of polynomial images of log-concave measures}

Here we use an
approximation argument, Theorem \ref{t4.1}, and the results of Section~2 to obtain
the main results of this work:
Corollaries \ref{c5.1}, \ref{c5.2}, \ref{c5.3}, and \ref{c5.4}.
In the formulations below by non-constant functions we mean functions
that do not coincide with constants almost everywhere.

\begin{theorem}\label{t5.1}
Let $d\in\mathbb{N}$.
Then, there is a constant $C(d)$ depending only on  $d$
such that, whenever
$\mu$ is a Radon log-concave measure on a locally convex space $E$ and
$f\in\mathcal{P}^d(\mu)$, for
every function $\varphi\in C_b^\infty(\mathbb{R})$ with $\|\varphi\|_\infty\le1$
one has
\begin{equation}\label{5.1}
\sigma_f^{1/d}\int\varphi'(f)d\mu\le
C(d)\|\varphi'\|_\infty^{1-1/d}.
\end{equation}
\end{theorem}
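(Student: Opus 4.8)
The plan is to derive (\ref{5.1}) from the finite-dimensional estimate of Theorem~\ref{t4.1} by a routine approximation, dealing with the low degrees separately. The trivial case is $\sigma_f=0$ (i.e.\ $f$ equals a constant $\mu$-a.e.), where there is nothing to prove. The case $d=1$ I would handle directly: if $f\in\mathcal{P}^1(\mu)$ is not $\mu$-a.e.\ constant, then $\nu:=\mu\circ f^{-1}$ is a non-degenerate one-dimensional log-concave measure, so it has a density $\rho$ of bounded variation, and
$$
\int\varphi'(f)\,d\mu=\int\varphi'\,d\nu\le\|D\nu\|_{\rm TV}=2\max\rho ;
$$
rescaling $\nu$ to isotropic position of unit isotropic constant multiplies $\max\rho$ by $\sigma_f$, so Theorem~\ref{t1.1} with $n=1$ gives $\sigma_f\max\rho\le C_1$, which is exactly (\ref{5.1}) for $d=1$ (where $\|\varphi'\|_\infty^{1-1/d}=1$). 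Assume henceforth $d\ge2$.

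By the definition of $\mathcal{P}^d(\mu)$ there are continuous linear maps $\pi_k\colon E\to\mathbb{R}^{n_k}$, $\pi_k(x)=(\ell^{(k)}_1(x),\dots,\ell^{(k)}_{n_k}(x))$ with $\ell^{(k)}_i\in E^*$, and polynomials $p_k$ of degree $d$ on $\mathbb{R}^{n_k}$, such that $f_k:=p_k\circ\pi_k\to f$ in $L^2(\mu)$. Put $\mu_k:=\mu\circ\pi_k^{-1}$; this is a log-concave probability measure on $\mathbb{R}^{n_k}$ by the very definition of log-concavity on a locally convex space. Since $\int\varphi'(f_k)\,d\mu=\int\varphi'(p_k)\,d\mu_k$ and the variance of $f_k$ under $\mu$ equals the variance of $p_k$ under $\mu_k$, Theorem~\ref{t4.1} applies to the pair $(\mu_k,p_k)$ and, crucially with the dimension-free constant $C(d)$, yields
$$
\sigma_{f_k}^{1/d}\int\varphi'(f_k)\,d\mu\le C(d)\,\|\varphi'\|_\infty^{1-1/d}\qquad\text{for all }k .
$$

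It then remains to let $k\to\infty$. Convergence $f_k\to f$ in $L^2(\mu)$ gives $\mathbb{E}f_k\to\mathbb{E}f$ and $\sigma_{f_k}^2\to\sigma_f^2$, hence $\sigma_{f_k}^{1/d}\to\sigma_f^{1/d}$. Passing to a subsequence we may assume $f_k\to f$ $\mu$-a.e.; since $\varphi'$ is bounded and continuous, $\varphi'(f_k)\to\varphi'(f)$ $\mu$-a.e.\ with $|\varphi'(f_k)|\le\|\varphi'\|_\infty$, so the dominated convergence theorem gives $\int\varphi'(f_k)\,d\mu\to\int\varphi'(f)\,d\mu$. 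Passing to the limit in the last display proves (\ref{5.1}).

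The step I expect to require the most care is precisely this limit passage, and within it the convergence of $\int\varphi'(f_k)\,d\mu$: since $f$ — and even its density — need not be bounded, the only uniform control available is $|\varphi'(f_k)|\le\|\varphi'\|_\infty$, and one must check that this, together with finiteness of $\mu$ and a.e.\ convergence along a subsequence, already suffices (it does). No extra nondegeneracy input is needed at the infinite-dimensional level, since the Carbery--Wright-type nondegeneracy is entirely contained in the finite-dimensional Theorem~\ref{t4.1}; accordingly the only genuine ingredients are the definition of $\mathcal{P}^d(\mu)$, the stability of log-concavity under linear images, the dimension-freeness of $C(d)$, and the continuity of the mean and the variance on $L^2(\mu)$.
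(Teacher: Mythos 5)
Your proof is correct and follows essentially the same route as the paper: approximate $f\in\mathcal{P}^d(\mu)$ by cylindrical polynomials, push $\mu$ forward to a finite-dimensional log-concave measure, apply the dimension-free estimate of Theorem~\ref{t4.1}, and pass to the limit using $\sigma_{f_k}\to\sigma_f$ and bounded (dominated) convergence for $\int\varphi'(f_k)\,d\mu$. Your separate treatment of $d=1$ via Krugova's formula and Theorem~\ref{t1.1} is a sensible supplement, since Theorem~\ref{t4.1} is stated only for $d\ge2$ while Theorem~\ref{t5.1} allows all $d\in\mathbb{N}$ — a point the paper's own proof passes over silently.
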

\begin{proof}
Let $f_n = f_n(\ell_{1,n},\ldots, \ell_{k_n,n})$
be a sequence of polynomials of degree $d$ in finitely many variables such
that its limit in $L^2(\mu)$ is $f$.
The polynomials $f_n$ satisfy inequality (\ref{5.1}) by Theorem \ref{t4.1}.
Since
$$
\int\varphi'(f_n)d\mu\to\int\varphi'(f)d\mu,
$$
$$
\int\bigl(f_n-\mathbb{E}f_n\bigr)^2d\mu\to\int\bigl(f-\mathbb{E}f\bigr)^2d\mu,
$$
this inequality is also valid for the function $f$.
\end{proof}

\begin{corollary}\label{c5.1}
Let $\mu$ be a Radon log-concave measure on a locally convex space $E$.
Let $C(d)$ be the same constant as in Theorem \ref{t5.1}
and let $f\in\mathcal{P}^d(\mu)$.
Then
$$
\sigma_f^{1/d}
\|(\mu\circ f^{-1})_h-\mu\circ f^{-1}\|_{TV}\le 2^{1-1/d}C(d)|h|^{1/d},
$$
$$
\sigma_f^{1/d}\mu\circ f^{-1}(A)\le C(d)\lambda(A)^{1/d},
$$
where $\lambda$ is the standard Lebesgue measure on the real line, $A$ is a
 Borel set on the real line.
In particular, the measure $\mu\circ f^{-1}$ is absolutely continuous.
\end{corollary}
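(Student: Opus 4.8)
The plan is to deduce the whole corollary from Theorem \ref{t5.1} by a change of variables, feeding the resulting inequality into Lemmas \ref{lem2.1} and \ref{lem2.3}. First I would dispose of the trivial case: if $f$ coincides with a constant $\mu$-a.e., then $\sigma_f=0$, the measure $\mu\circ f^{-1}$ is a Dirac mass, and the two displayed inequalities hold vacuously (the absolute continuity assertion being understood, in accordance with the convention stated at the beginning of this section, only for non-constant $f$). So from now on I assume $\sigma_f>0$.

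Next I would set $\nu:=\mu\circ f^{-1}$, a Borel probability measure on $\mathbb{R}$, and rewrite Theorem \ref{t5.1} in terms of $\nu$. Since $f$ has a version that is a genuine polynomial, it is Borel measurable, so for every $\varphi\in C_b^\infty(\mathbb{R})$ the change of variables formula gives $\int_{\mathbb{R}}\varphi'\,d\nu=\int_E\varphi'(f)\,d\mu$. Hence Theorem \ref{t5.1} becomes
$$
\int_{\mathbb{R}}\varphi'\,d\nu\le C(d)\sigma_f^{-1/d}\|\varphi'\|_\infty^{1-1/d}\qquad\text{for all }\varphi\in C_b^\infty(\mathbb{R})\text{ with }\|\varphi\|_\infty\le1.
$$
This is precisely the hypothesis of Lemma \ref{lem2.1} and of Lemma \ref{lem2.3}, taken with $\alpha=1/d$ and with the constant $C=C(d)\sigma_f^{-1/d}$.

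Then I would simply invoke those two lemmas. Lemma \ref{lem2.1} yields $\|\nu_h-\nu\|_{TV}\le 2^{1-1/d}C(d)\sigma_f^{-1/d}|h|^{1/d}$ for all $h\in\mathbb{R}$, which is the first inequality after multiplying through by $\sigma_f^{1/d}$ and recalling $\nu_h=(\mu\circ f^{-1})_h$. The first assertion of Lemma \ref{lem2.3} gives $\nu(A)\le C(d)\sigma_f^{-1/d}\lambda(A)^{1/d}$ for every Borel set $A\subset\mathbb{R}$, which is the second inequality; specializing to $A$ with $\lambda(A)=0$ forces $\nu(A)=0$, so $\nu=\mu\circ f^{-1}$ is absolutely continuous with respect to Lebesgue measure.

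I do not expect any genuine obstacle here: the corollary is essentially a restatement of Theorem \ref{t5.1} in the language of the image measure $\nu$, combined with the already-proved Lemmas \ref{lem2.1} and \ref{lem2.3}. The only points that need a word of care are the legitimacy of the change of variables (which rests on $f$ being an honest polynomial, hence measurable) and the harmless exclusion of the constant case.
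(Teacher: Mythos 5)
Your proposal is correct and follows the paper's route exactly: the paper proves this corollary simply by applying Lemma \ref{lem2.1} and Lemma \ref{lem2.3} (with $\alpha=1/d$ and $C=C(d)\sigma_f^{-1/d}$) to the image measure, exactly as you do. Your extra remarks on the change of variables and on the degenerate constant case are harmless elaborations of what the paper leaves implicit.
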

\begin{proof}
We apply  Lemma \ref{lem2.1} and  Lemma \ref{lem2.3}.
\end{proof}

Denote by $\rho_f$ the density of the measure $\mu\circ f^{-1}$,
which exists by the previous corollary.
The next corollary follows from Lemma \ref{lem2.3}.

\begin{corollary}\label{c5.2}
Let $\mu$ be a Radon log-concave measure on a locally convex space $E$ and let
$f\in\mathcal{P}^d(\mu)$.
Then $\rho_f\in L^p(\mathbb{R})$ whenever $1<p< \frac{d}{d-1}$,
and for the $L^p$-norm of $\rho_f$ the following inequality holds:
$$
\sigma_f^{1-1/p}\|\rho_f\|_{L^p(\mathbb{R})}\le C_1(d,p),
$$
where
$$
C_1(d,p) = \biggl(p(p-1)^{-1} +p\Bigl(\frac{d}{d-1}-p\Bigr)^{-1}\biggr)^{1/p}C(d)^{d(1-1/p)}.
$$
\end{corollary}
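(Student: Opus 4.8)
The plan is to combine Corollary \ref{c5.1} with Lemma \ref{lem2.3} (or rather, with its proof scheme), applied not to the measure $\nu=\mu\circ f^{-1}$ itself but to the rescaled measure obtained by pushing forward through $f/\sigma_f$. First I would set $g:=f/\sigma_f\in\mathcal P^d(\mu)$, so that $\sigma_g=1$ and $\mathbb E g=\mathbb E f/\sigma_f$. By Theorem \ref{t5.1} applied to $g$ (or equivalently by reading off inequality (\ref{5.1}) in the normalized form), for every $\varphi\in C_b^\infty(\mathbb R)$ with $\|\varphi\|_\infty\le1$ one has
$$
\int\varphi'(g)\,d\mu\le C(d)\|\varphi'\|_\infty^{1-1/d},
$$
since $\sigma_g=1$. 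Writing $\nu_g:=\mu\circ g^{-1}$, this says exactly that $\nu_g$ satisfies the hypothesis of Lemma \ref{lem2.3} with $C=C(d)$ and $\alpha=1/d$ (note $1-\alpha=1-1/d$ and $\tfrac1{1-\alpha}=\tfrac{d}{d-1}$, which matches the range $1<p<\tfrac{d}{d-1}$ in the statement).

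Next I would invoke the second part of Lemma \ref{lem2.3} directly: the density $\rho_{\nu_g}$ of $\nu_g$ lies in $L^p(\lambda)$ for $1<p<\tfrac{d}{d-1}$, and
$$
\|\rho_{\nu_g}\|_{L^p(\lambda)}\le\Bigl(p(p-1)^{-1}+p\bigl(\tfrac{d}{d-1}-p\bigr)^{-1}\Bigr)^{1/p}C(d)^{\frac1\alpha(1-1/p)}
=C_1(d,p),
$$
because $\tfrac1\alpha(1-1/p)=d(1-1/p)$. It only remains to transfer this back to $\rho_f$, the density of $\mu\circ f^{-1}$. Since $\mu\circ f^{-1}$ is the image of $\nu_g$ under the affine map $t\mapsto\sigma_f t$ (indeed $f=\sigma_f g$), the densities are related by
$$
\rho_f(t)=\sigma_f^{-1}\rho_{\nu_g}(t/\sigma_f),
$$
and a change of variables gives $\|\rho_f\|_{L^p(\lambda)}^p=\sigma_f^{-p}\int|\rho_{\nu_g}(t/\sigma_f)|^p\,dt=\sigma_f^{1-p}\|\rho_{\nu_g}\|_{L^p(\lambda)}^p$, i.e. $\|\rho_f\|_{L^p}=\sigma_f^{1/p-1}\|\rho_{\nu_g}\|_{L^p}$. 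Rearranging yields $\sigma_f^{1-1/p}\|\rho_f\|_{L^p(\mathbb R)}=\|\rho_{\nu_g}\|_{L^p(\lambda)}\le C_1(d,p)$, which is precisely the claim.

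There is no serious obstacle here: the statement is essentially a scaling-normalized restatement of Lemma \ref{lem2.3} combined with the already-established fractional-smoothness estimate, and the density $\rho_f$ is known to exist by Corollary \ref{c5.1}. The only points requiring minor care are: checking that $g=f/\sigma_f$ is still an element of $\mathcal P^d(\mu)$ (clear, since $\mathcal P^d(\mu)$ is a linear subspace) and hence that Theorem \ref{t5.1} applies with the right constant; verifying the arithmetic identifications $\alpha=1/d$, $\tfrac1{1-\alpha}=\tfrac d{d-1}$, $\tfrac1\alpha(1-1/p)=d(1-1/p)$; and the elementary change-of-variables computation relating $\|\rho_f\|_{L^p}$ to $\|\rho_{\nu_g}\|_{L^p}$, which accounts for exactly the factor $\sigma_f^{1-1/p}$ appearing in the conclusion.
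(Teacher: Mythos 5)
Your proposal is correct and takes essentially the same route as the paper, which simply derives the corollary from Lemma \ref{lem2.3} together with the estimate of Theorem \ref{t5.1}: your normalization $g=f/\sigma_f$ followed by the change of variables is just an equivalent way of applying Lemma \ref{lem2.3} to $\nu=\mu\circ f^{-1}$ with $\alpha=1/d$ and constant $C(d)\sigma_f^{-1/d}$, since $(\sigma_f^{-1/d})^{d(1-1/p)}=\sigma_f^{-(1-1/p)}$. All the exponent bookkeeping in your argument checks out.
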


Our next corollary generalizes some
results from \cite{Nourdin2, Nourdin3} and \cite{BKZ}
to the case of  arbitrary log-concave measures in place of Gaussian measures,
and even in the Gaussian case this estimate provides a better rate of convergence
as  compared to analogous estimates
from \cite{Nourdin2} and \cite{BKZ} in the one-dimensional case.
It follows directly from Lemma \ref{lem2.2} and Theorem \ref{t5.1}.

\begin{corollary}\label{c5.3}
Let $\mu$ be a Radon log-concave measure on a locally convex space $E$.
Then for every pair of non-constant functions $f, g\in \mathcal{P}^d(\mu)$
one has
$$
\|\mu\circ f^{-1}-\mu\circ g^{-1}\|_{\rm TV}\le
C_d(\sigma_f,\sigma_g)\|\mu\circ f^{-1} - \mu\circ g^{-1}\|_{\rm FM}^{1/(1+d)},
$$
where
$$
C_d(\sigma_f,\sigma_g)=1+2C(d)(\sigma_f^{-1/d} +
\sigma_g^{-1/d})
(2\pi)^{-1/2}\int e^{-t^2/2}|t|^{1/d} dt.
$$
\end{corollary}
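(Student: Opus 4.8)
The plan is to obtain Corollary \ref{c5.3} as a direct consequence of Theorem \ref{t5.1}, Corollary \ref{c5.1} and Lemma \ref{lem2.2}. Write $\nu:=\mu\circ f^{-1}$, $\sigma:=\mu\circ g^{-1}$ and fix the parameter $\alpha:=1/d$. Since $f$ and $g$ are non-constant, the variances $\sigma_f^2$ and $\sigma_g^2$ are strictly positive (and finite, because $f,g\in L^2(\mu)$), so $\sigma_f^{-1/d}$ and $\sigma_g^{-1/d}$ are finite positive numbers and $C_d(\sigma_f,\sigma_g)$ is well defined; this is the only role of the non-constancy hypothesis.

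First I would record the two shift estimates. By Corollary \ref{c5.1} (which is itself the combination of Theorem \ref{t5.1} and Lemma \ref{lem2.1}) one has, for every $h\in\mathbb{R}$,
$$
\|\nu_h-\nu\|_{\rm TV}\le 2^{1-1/d}C(d)\sigma_f^{-1/d}|h|^{1/d},\qquad
\|\sigma_h-\sigma\|_{\rm TV}\le 2^{1-1/d}C(d)\sigma_g^{-1/d}|h|^{1/d}.
$$
Thus the pair $(\nu,\sigma)$ satisfies the hypotheses of Lemma \ref{lem2.2} with $\alpha=1/d$ and with $C_\nu:=2^{1-1/d}C(d)\sigma_f^{-1/d}$, $C_\sigma:=2^{1-1/d}C(d)\sigma_g^{-1/d}$.

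Next I would invoke Lemma \ref{lem2.2}. Because $\tfrac{\alpha}{1+\alpha}=\tfrac{1/d}{1+1/d}=\tfrac1{1+d}$, it gives
$$
\|\sigma-\nu\|_{\rm TV}\le\Bigl(2+(C_\sigma+C_\nu)(2\pi)^{-1/2}\!\int e^{-t^2/2}|t|^{1/d}\,dt\Bigr)\|\sigma-\nu\|_{\rm FM}^{1/(1+d)}.
$$
It remains to bound the bracketed quantity by $C_d(\sigma_f,\sigma_g)$: using $2^{1-1/d}\le2$ we get $C_\sigma+C_\nu\le2C(d)(\sigma_f^{-1/d}+\sigma_g^{-1/d})$, so the coefficient of the integral is already of the required shape, and the additive term is absorbed by the elementary bounds $\|\sigma-\nu\|_{\rm FM}\le2$ and $\|\sigma-\nu\|_{\rm TV}\le2$ exactly as in the proof of Lemma \ref{lem2.2}. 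This yields the stated estimate.

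I do not expect a substantive obstacle: Corollary \ref{c5.3} is a genuine corollary, with all the real work already done in Theorem \ref{t5.1} (through the localization argument of Theorem \ref{t4.1}) and in Lemma \ref{lem2.2}. The only points that need care are the bookkeeping of the numerical constants — the factor $2^{1-1/d}$ coming from Lemma \ref{lem2.1}, the exponent collapse $\tfrac{\alpha}{1+\alpha}=\tfrac1{1+d}$, and the way the generic constant of Lemma \ref{lem2.2} reduces to $C_d(\sigma_f,\sigma_g)$ — together with the harmless but indispensable remark that $\sigma_f,\sigma_g\neq0$.
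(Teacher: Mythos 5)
Your proposal is correct and follows essentially the paper's own route: the paper also obtains Corollary \ref{c5.3} by feeding the shift estimates of Corollary \ref{c5.1} (i.e.\ Theorem \ref{t5.1} combined with Lemma \ref{lem2.1}) into Lemma \ref{lem2.2} with $\alpha=1/d$, so that $\tfrac{\alpha}{1+\alpha}=\tfrac1{1+d}$. The only caveat is your claim that the additive term is ``absorbed'': Lemma \ref{lem2.2} as stated yields the constant $2+(C_\sigma+C_\nu)(2\pi)^{-1/2}\int e^{-t^2/2}|t|^{1/d}dt$, which does not reduce to the stated $1+2C(d)(\sigma_f^{-1/d}+\sigma_g^{-1/d})(2\pi)^{-1/2}\int e^{-t^2/2}|t|^{1/d}dt$ without reopening the proof of Lemma \ref{lem2.2} (given in the cited reference), but this mismatch in the additive constant is already present between the paper's Lemma \ref{lem2.2} and Corollary \ref{c5.3} and does not affect the substance of your argument.
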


Combining Remark \ref{rem2.2} with the
previous lemma, we obtain the following result.

\begin{corollary}\label{c5.4}
Let $\mu$ be a Radon log-concave measure on a locally convex space $E$.
Then for every pair of non-constant
functions $f, g\in \mathcal{P}^d(\mu)$
one has
\begin{multline*}
\|\rho_f-\rho_g\|_{L^p(\mathbb{R})}\le C_1(d,p) (\sigma_f^{-1/d}+\sigma_g^{-1/d})^{d(1-1/p)}\|\mu\circ f^{-1}-\mu\circ g^{-1}\|_{\rm TV}^{1-d(1-1/p)}\\
\le
C_1(d,p) (\sigma_f^{-1/d}+\sigma_g^{-1/d})^{d(1-1/p)} C_d(\sigma_f,\sigma_g)^{1-d(1-1/p)}\|\mu\circ f^{-1} - \mu\circ g^{-1}\|_{\rm FM}^{\frac{1}{1+d}(1-d(1-1/p))},
\end{multline*}
where $\rho_f, \rho_g$ are densities of the measures
$\mu\circ f^{-1}, \mu\circ g^{-1}$, respectively, and $1<p< \frac{d}{d-1}$.
\end{corollary}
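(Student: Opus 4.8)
The plan is to obtain the statement as a direct composition of facts already established: apply the $L^p$-version of the fractional smoothness estimate recorded in Remark \ref{rem2.2} to the pair of image measures $\nu:=\mu\circ f^{-1}$ and $\sigma:=\mu\circ g^{-1}$, feeding in the one-dimensional bound of Theorem \ref{t5.1} as the required Malliavin-type hypothesis, and then convert the total variation distance that appears on the right-hand side into the Fortet--Mourier distance by means of Corollary \ref{c5.3}.

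First I would fix a pair of non-constant functions $f,g\in\mathcal{P}^d(\mu)$; non-constancy guarantees $\sigma_f,\sigma_g>0$, and Corollary \ref{c5.1} then gives that $\mu\circ f^{-1}$ and $\mu\circ g^{-1}$ are absolutely continuous, with densities $\rho_f,\rho_g\in L^1(\lambda)$. By Theorem \ref{t5.1}, for every $\varphi\in C_b^\infty(\mathbb{R})$ with $\|\varphi\|_\infty\le1$,
\[
\int\varphi'\,d(\mu\circ f^{-1})=\int\varphi'(f)\,d\mu\le C(d)\sigma_f^{-1/d}\|\varphi'\|_\infty^{1-1/d},
\]
and likewise with $g$ in place of $f$; applying the same bound to $-\varphi$ shows that $\nu$ and $\sigma$ satisfy the hypotheses of the last part of Remark \ref{rem2.2} with $\alpha=1/d$, $C_\nu=C(d)\sigma_f^{-1/d}$, $C_\sigma=C(d)\sigma_g^{-1/d}$. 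Since $\tfrac{1}{1-\alpha}=\tfrac{d}{d-1}$ and $\tfrac1\alpha(1-1/p)=d(1-1/p)$, the admissible range $1<p<\tfrac{d}{d-1}$ coincides with the one in Remark \ref{rem2.2}. Plugging these values in, using that $\|\nu-\sigma\|_{L^1(\lambda)}=\|\mu\circ f^{-1}-\mu\circ g^{-1}\|_{\rm TV}$, and pulling $C(d)$ out of $(C_\nu+C_\sigma)^{d(1-1/p)}$, we arrive at
\[
\|\rho_f-\rho_g\|_{L^p(\mathbb{R})}\le C_1(d,p)(\sigma_f^{-1/d}+\sigma_g^{-1/d})^{d(1-1/p)}\|\mu\circ f^{-1}-\mu\circ g^{-1}\|_{\rm TV}^{1-d(1-1/p)},
\]
with $C_1(d,p)$ exactly the constant displayed in Corollary \ref{c5.2}; this is the first claimed inequality.

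For the second inequality I would observe that $p<\tfrac{d}{d-1}$ forces $1-1/p<1/d$, so the exponent $1-d(1-1/p)$ is strictly positive. Hence the estimate $\|\mu\circ f^{-1}-\mu\circ g^{-1}\|_{\rm TV}\le C_d(\sigma_f,\sigma_g)\|\mu\circ f^{-1}-\mu\circ g^{-1}\|_{\rm FM}^{1/(1+d)}$ of Corollary \ref{c5.3} may be raised to the power $1-d(1-1/p)$ and substituted into the first inequality, which yields the second one.

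There is no genuine obstacle here: the argument is a bookkeeping composition of previously proved results. The only points requiring a little care are the verification that the exponents line up — in particular the positivity of $1-d(1-1/p)$, which both legitimizes raising the Corollary \ref{c5.3} bound to that power and matches the admissible range of $p$ — the use of Remark \ref{rem2.2} applied to $\varphi$ and to $-\varphi$ in order to produce the constant $C_\nu+C_\sigma$ for the signed measure $\nu-\sigma$, and the identification of the $L^1$-norm $\|\rho_f-\rho_g\|_{L^1(\lambda)}$ appearing implicitly in Remark \ref{rem2.2} with the total variation distance $\|\mu\circ f^{-1}-\mu\circ g^{-1}\|_{\rm TV}$.
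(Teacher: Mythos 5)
Your proposal is correct and follows essentially the same route as the paper: the paper's proof is precisely the combination of Remark \ref{rem2.2} (with $\alpha=1/d$ and the hypothesis supplied by Theorem \ref{t5.1}, giving $C_\nu=C(d)\sigma_f^{-1/d}$, $C_\sigma=C(d)\sigma_g^{-1/d}$ and hence the constant $C_1(d,p)$ of Corollary \ref{c5.2}) with Corollary \ref{c5.3} raised to the positive power $1-d(1-1/p)$. Your additional bookkeeping (positivity of that exponent, the use of $-\varphi$, and identifying $\|\rho_f-\rho_g\|_{L^1(\lambda)}$ with the total variation distance) just makes explicit what the paper leaves implicit.
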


\end{document}